\renewcommand{\Im}{\mathrm{Im}\,}
\renewcommand{\Re}{\mathrm{Re}\,}
\newcommand{\ie}{\emph{i.e.}}
\newcommand{\cf}{\emph{cf.}}
\newcommand{\Real}{\mathbb{R}}
\let\R\Real
\newcommand{\Com}{\mathbb{C}}
\let\C\Com
\newcommand{\Nat}{\mathbb{N}}
\newcommand{\Dom}{\mathsf{D}}
\newcommand{\dist}{\mathop{\mathrm{dist}}\nolimits}
\newcommand{\eps}{\varepsilon}
\newcommand{\sii}{L^2}
\newcommand{\der}{\mathrm{d}}
\renewcommand{\P}{\mathbb{P}}
\newcommand{\norm}[1]{\|#1\|}
\newtheorem{Theorem}{Theorem}
\newtheorem{Lemma}[Theorem]{Lemma}
\newtheorem{Proposition}[Theorem]{Proposition}
\numberwithin{Theorem}{section}
\theoremstyle{definition}
\newtheorem{Remark}[Theorem]{Remark}
\numberwithin{equation}{section}
\def\OMIT#1{}
\definecolor{DarkGreen}{rgb}{0,0.5,0.1} 
\newcommand\soutD{\bgroup\markoverwith
{\textcolor{DarkGreen}{\rule[.5ex]{2pt}{1pt}}}\ULon}
\newcommand\soutP{\bgroup\markoverwith
{\textcolor{blue}{\rule[.5ex]{2pt}{1pt}}}\ULon}
\newcommand{\Hm}[1]{\leavevmode{\marginpar{\tiny%
$\hbox to 0mm{\hspace*{-0.5mm}$\leftarrow$\hss}%
\vcenter{\vrule depth 0.1mm height 0.1mm width \the\marginparwidth}%
\hbox to
0mm{\hss$\rightarrow$\hspace*{-0.5mm}}$\\\relax\raggedright #1}}}
\begin{document}
%
\title{\textbf{\LARGE The relativistic rotated harmonic oscillator}}
\author{A.~Balmaseda,$^a$ \
D.~Krej\v{c}i\v{r}{\'\i}k\,$^b$ \ 
and \ J.~M.~P\'erez-Pardo\,$^a$}
\date{\small
\vspace{-5ex}
\begin{quote}
\emph{
\begin{itemize}
\item[$a)$] 
Departamento de Matem\'aticas, Universidad Carlos III de Madrid, Avda. de la Universidad 30, 28911 Legan\'es, Madrid, Spain;
abalmase@math.uc3m.es, jmppardo@math.uc3m.es.%
\item[$b)$] 
Department of Mathematics, Faculty of Nuclear Sciences and 
Physical Engineering, Czech Technical University in Prague, 
Trojanova 13, 12000 Prague 2, Czechia;
david.krejcirik@fjfi.cvut.cz.%
\end{itemize}
}
\end{quote}
15 November 2024
}
\maketitle

\begin{abstract}
\noindent
We introduce a relativistic version of 
the non-self-adjoint operator
obtained by a dilation analytic transformation
of the quantum harmonic oscillator.
While the spectrum is real and discrete,
we show that the eigenfunctions do not form a basis
and that the pseudospectra are highly non-trivial.
\end{abstract}
%

%
%
\section{Introduction}
%
Harmonic oscillations are very common in nature,
ranging from classical pendula to molecular vibrations.
Moreover, general motions can often be approximated 
by a harmonic oscillator for small displacements
around a stable equilibrium.
On the pragmatic side, the importance lies in the availability  
of explicit solutions, including quantum mechanics.

In one dimension, the quantum harmonic motion is described 
in the Hilbert space $\sii(\Real)$
by the Schr\"odinger operator
\begin{equation}\label{harmonic}
  S_0 :=  - \partial_x^2 + x^2 
  \,.
\end{equation}
On its natural (intersection) domain, the operator is self-adjoint 
and its spectrum equals the discrete set 
\begin{equation}\label{spectrum0}
  \sigma(S_0) := \{2n+1\}_{n=0}^\infty
  \,.
\end{equation}
The pseudospectra, described by the norm of the resolvent,
are trivial because of the classical formula 
\begin{equation}\label{trivial}
  \big\|(S_0-z)^{-1}\big\| 
  = \frac{1}{\dist\big(z,\sigma(S_0)\big)} 
  \,.
\end{equation}

In the seminal paper~\cite{Davies_1999a},
motivated by the computation of resonances 
by the dilation analyticity technique,
Davies considered the complex-rotated version 
of the quantum harmonic oscillator
\begin{equation}\label{Davies}
  S_\theta := - e^{-i\theta} \partial_x^2 + e^{i\theta} x^2 
  \qquad \mbox{with} \qquad
  \theta \in (-\mbox{$\frac{\pi}{2}$},\mbox{$\frac{\pi}{2}$})
  \,.
\end{equation}
On the first glance, the transformation looks innocent;
indeed the spectrum is preserved, $\sigma(S_\theta) = \sigma(S_0)$.
In reality, however, the non-self-adjoint operator~$S_\theta$
with $\theta\not=0$ becomes tameless:
the eigenfunctions do not form a basis,
$S_\theta$ is not similar 
(via a bounded and boundedly invertible transformation)
to a normal operator and its pseudospectra are highly non-trivial.
More specifically,
\begin{equation}\label{Davies.wild}
  \lim_{r \to +\infty}
  \big\|\big(S_\theta-r e^{i\vartheta}\big)^{-1}\big\|  
  {=} \infty
  \qquad \mbox{whenever} \qquad
  \vartheta \in (-\theta,\theta) \setminus \{0\}
  \,.
\end{equation}
Note that this wild behaviour is in striking contrast
with the self-adjoint state of the art~\eqref{trivial}.
One conclusion of Davies' paper~\cite{Davies_1999a} 
is that the computation of high-energy resonances 
becomes numerically unstable.
Among the many subsequent studies of the Davies' operator~\eqref{Davies},
let us point out the important contributions 
\cite{Boulton_2002,Davies-Kuijlaars_2004,Pravda-Starov_2006,
Bagarello_2010,
Hitrik-Sjostrand-Viola_2013,Viola_2013,Henry,
KS4,Arnal-Siegl_2023} 
and the survey~\cite{KSTV}.

The objective of this paper is to introduce 
a relativistic version of the rotated harmonic oscillator~\eqref{Davies}. 
There seems to be no consensus on 
the Dirac operator in the spinorial Hilbert space $\sii(\Real)^4$
describing the relativistic quantum harmonic oscillator
\cite{Aldaya-Bisquert-Navarro-Salas_1991,Toyama-Nogami_1999}.
The most popular choice seems to be 
\cite{Moshinsky-Szczepaniak_1989,
Martinez-y-Romero-Nunez-Yepez-Salas-Brito_1995}
\begin{equation}
  H_0 := - i \alpha_1 \partial_x
  - \alpha_2  x
  + m \alpha_3
  \,,
\end{equation}
where $m \geq 0$ is the mass of the particle 
and $\alpha_0,\alpha_1,\alpha_2,\alpha_3$ are 
the standard $4 \times 4$ Dirac matrices. 
Indeed, the anticommutation relations
$
  \{\alpha_\mu, \alpha_\nu\} 
  = 2 \delta_{\mu\nu}
$ 
valid with every $\mu,\nu \in \{0,1,2,3\}$
lead to the desired result that~$H_0$ is a ``square root'' of~$S_0$,
see~\eqref{eq:square}.
Then the complex-rotated version of~$H_0$ reads   
\begin{equation}\label{operator}
  H_\theta := - i \alpha_1 e^{-i\theta/2} \partial_x
  - \alpha_2 e^{i\theta/2} x
  + m \alpha_3
  \qquad \mbox{with} \qquad
  \theta \in (-\mbox{$\frac{\pi}{2}$},\mbox{$\frac{\pi}{2}$})
  \,.
\end{equation}
The goal of this paper is to perform a detailed spectral
and pseudospectral analysis of this operator. The choice of this operator as the relativistic version {of~\eqref{Davies}}
 can be further justified by studying the non-relativistic limit 
{(\cf~Theorem~\ref{thm:nrlimit})}.

The study of Dirac operators has its own interest, 
both from a purely mathematical side 
as well as from the physical one since relevant phenomena in nature
are modelled by them. A prominent example beyond the relativistic systems
is the description of the graphene molecule \cite{Novoselov_2012}. 
Despite the self-adjoint case has been treated extensively in the literature,   
the non-self-adjoint situations have been considered only in few cases \cite{Tkachenko_2001,
Cuenin-Laptev-Tretter_2014,Cuenin_2014,Cuenin_2017,
Enblom_2018,Cuenin-Siegl_2018,FK9,CIKS,
Heriban-Tusek_2022,KN,AFKS,Ancona-Fanelli-Schiavone_2022,
Mizutani-Schiavone_2022,KK10,BKN}.

The motivation for the present study is not merely
our mathematical curiosity, but also the concept
of unconventional representation of quantum observables
by non-self-adjoint operators.
This observation goes back to a 1992 article by physicists \cite{GHS}
concerned with bounded operators similar to self-adjoint ones. However, there are important mathematical intricacies
for unbounded operators, which can be conveniently handled
by the notion of pseudospectra, see \cite{KSTV} for a survey.

It follows from the present results that $H_\theta$
is actually not quasi-self-adjoint (\ie\ similar, via a bounded
and boundedly invertible transformation, to a self-adjoint operator). Our argument is again based on a pseudospectral analysis,
namely, on studying the growth of the norm of the resolvent
for large complex energies. 

More generally, despite the growing interest in non-self-adjoint 
Dirac operators, very few explicitly solvable models 
are available (see~\cite{BKN} from the list above for an exception).
In this paper, we offer the community a relativistic analogue of 
the highly influential Davies's oscillator~\eqref{Davies},
which involves both the illusory simplicity due to the availability 
of explicit solutions as well as all the non-self-adjoint peculiarities. 

Let us now present our main results. 
First of all, we realise~$H_\theta$ 
as a closed operator with compact resolvent
(see Theorem~\ref{Thm.closed}).
Contrary to the sectorial Schr\"odinger case~\eqref{Davies}, 
however, the numerical range of~$H_\theta$ coincides 
with the whole complex plane whenever $\theta\not=0$. 
The crucial observation is the supersymmetric relationship
\begin{equation}\label{eq:square}
  H_\theta^2 = (S_\theta + m^2) I_{\Com^4} + i \alpha_1\alpha_2 
  \,,
\end{equation}
where the matrix~$\alpha_1\alpha_2$ is diagonal
{(\cf~\eqref{eq:involutionmatrix})}. 
Consequently,
\begin{equation}\label{eq:spectrum_Dirac}
  \sigma(H_\theta)
  = \left\{-\sqrt{2n+m^2},\sqrt{2n+m^2}\right\}_{n=0}^\infty
  \,.
\end{equation}
Denoting by $\P_n^\pm$
the projectors onto the eigenspaces
of the eigenvalues $\pm \sqrt{2n+m^2}$,
our prime result determines the asymptotics of the norms of 
the eigenprojectors. 
\begin{Theorem}\label{Thm.proj}
One has
$$
  \lim_{n \to \infty} \frac{\log \| \P_n^\pm \|}{n} 
  = \log \sqrt{\frac{1+|\sin\theta|}{1-|\sin\theta|}}
  \,.
$$
\end{Theorem}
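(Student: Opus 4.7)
The plan is to reduce the claim to the classical asymptotics
\begin{equation*}
  \lim_{n\to\infty}\frac{\log\|\pi_n\|}{n}
  \;=\;\log\sqrt{\frac{1+|\sin\theta|}{1-|\sin\theta|}}
\end{equation*}
for the Riesz projectors $\pi_n$ of the Davies oscillator~$S_\theta$ onto its $n$-th eigenvalue, established in \cite{Davies_1999a,Boulton_2002,Davies-Kuijlaars_2004}. The bridge to $H_\theta$ is provided by the supersymmetric identity~\eqref{eq:square}, a charge-conjugation symmetry, and the parity of the Davies eigenfunctions.

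\textbf{Block decomposition of $H_\theta^2$.} Since $i\alpha_1\alpha_2$ is Hermitian and diagonal with double eigenvalues $\pm 1$, identity~\eqref{eq:square} decomposes $H_\theta^2$ as the direct sum of the two shifted Davies oscillators $S_\theta+m^2\pm 1$, each acting on $\sii(\Real)\otimes\mathrm{Range}(P_\pm)$, where $P_\pm$ are the matricial projectors onto the $\pm 1$-eigenspaces of $i\alpha_1\alpha_2$ in $\C^4$. Matching the eigenvalue $\lambda_n^2:=2n+m^2$ of $H_\theta^2$ in each block yields the Riesz projector
\begin{equation*}
  Q_n\;=\;\pi_n\otimes P_-\;+\;\pi_{n-1}\otimes P_+\qquad(\pi_{-1}:=0)\,.
\end{equation*}
The sandwich $\max(\|\pi_n\|,\|\pi_{n-1}\|)\leq\|Q_n\|\leq\|\pi_n\|+\|\pi_{n-1}\|$ together with the Davies asymptotics gives $\lim_n\log\|Q_n\|/n=\log\sqrt{(1+|\sin\theta|)/(1-|\sin\theta|)}$. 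Moreover, the fourth Dirac matrix~$\alpha_0$ anticommutes with $\alpha_1,\alpha_2,\alpha_3$, so $\alpha_0 H_\theta\alpha_0=-H_\theta$; as $\alpha_0$ is unitary, it intertwines $\P_n^+$ and $\P_n^-$, giving $\|\P_n^+\|=\|\P_n^-\|$.

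\textbf{From $Q_n$ to $\P_n^\pm$ and main obstacle.} On the finite-dimensional $H_\theta$-invariant subspace $\mathrm{Range}(Q_n)$, the operator $H_\theta$ satisfies $H_\theta^2=\lambda_n^2 I$; the finite-dimensional spectral calculus therefore produces the clean identity
\begin{equation*}
  \P_n^\pm\;=\;\tfrac{1}{2}Q_n\;\pm\;\tfrac{1}{2\lambda_n}H_\theta Q_n\,.
\end{equation*}
Combined with $\P_n^++\P_n^-=Q_n$ and the above symmetry, the lower bound $\|\P_n^\pm\|\geq\tfrac{1}{2}\|Q_n\|$ is immediate, so the correct exponential rate from below is transferred from $\|Q_n\|$. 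The matching upper bound reduces to showing $\|H_\theta Q_n\|\leq C\lambda_n\|Q_n\|$ for a constant $C$, which is the main technical point. The Davies eigenfunctions satisfy $\phi_n(-x)=(-1)^n\phi_n(x)$, so $\langle\phi_n,\phi_{n-1}\rangle_{\sii}=0$; combined with the orthogonality $\mathrm{Range}(P_-)\perp\mathrm{Range}(P_+)$, this provides an \emph{orthonormal} basis of $\mathrm{Range}(Q_n)$. In this basis one computes $H_\theta$ explicitly: the ladder-operator action of the creation and annihilation parts of $S_\theta$ shifts the Davies index by $\pm 1$ with coefficients of order $\sqrt n$, while the identities $(i\alpha_1+\alpha_2)|_{\mathrm{Range}(P_+)}=0=(i\alpha_1-\alpha_2)|_{\mathrm{Range}(P_-)}$ (immediate from the Clifford relations) ensure that the resulting matrix of $H_\theta|_{\mathrm{Range}(Q_n)}$ has entries of order $\sqrt n=O(\lambda_n)$. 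Therefore $\|H_\theta Q_n\|\leq O(\lambda_n)\|Q_n\|$, which yields $\|\P_n^\pm\|\leq O(\|Q_n\|)$ and closes the proof.
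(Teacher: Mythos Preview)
Your overall architecture is sound and genuinely different from the paper's: the paper works with the explicit Dirac eigenfunctions $\chi^j_n,\tilde\chi^j_n$ and proves the key orthogonality $(\chi^1_n,\chi^2_n)=0$ to estimate $\|\P_n\|$ directly in terms of $\|\varphi_n\|^2+\|\varphi_{n-1}\|^2$, whereas you work structurally with the splitting $\P_n^\pm=\tfrac12 Q_n\pm\tfrac{1}{2\lambda_n}H_\theta Q_n$. Your lower bound $\|\P_n^\pm\|\ge\tfrac12\|Q_n\|$ is clean and correct.

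The upper bound, however, has a genuine gap. You claim that in an \emph{orthonormal} basis of $\mathrm{Range}(Q_n)$ the matrix of $H_\theta$ has entries $O(\sqrt n)$, because ``the ladder-operator action shifts the Davies index by $\pm1$ with coefficients of order $\sqrt n$''. But the relation $L_+\varphi_{n-1}=\sqrt{2n}\,\varphi_n$ gives the coefficient $\sqrt{2n}$ only in the \emph{biorthonormal} system $\{\varphi_k\}$. Passing to the unit vectors $\hat\varphi_k:=\varphi_k/\|\varphi_k\|$ one gets $L_+\hat\varphi_{n-1}=\sqrt{2n}\,\dfrac{\|\varphi_n\|}{\|\varphi_{n-1}\|}\,\hat\varphi_n$, so the off-diagonal entries in your orthonormal basis are $\sqrt{2n}\,\|\varphi_n\|/\|\varphi_{n-1}\|$ and $\sqrt{2n}\,\|\varphi_{n-1}\|/\|\varphi_n\|$. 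The logarithmic asymptotic $\lim_n \log\|\varphi_n\|^2/n=\log r$ that you invoke does \emph{not} control this ratio (it allows, e.g., $\log\|\varphi_n\|^2=n\log r+(-1)^n\sqrt n$). Hence ``entries $O(\sqrt n)$'' is unjustified as stated.

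The fix is easy and stays within your framework. Instead of passing to an orthonormal basis, estimate $\|H_\theta Q_n\|$ directly from $Q_n=\pi_n\otimes P_-+\pi_{n-1}\otimes P_+$ and the ladder action: for the block $\pi_n\otimes P_-$ one finds $\|H_\theta(\pi_n\otimes P_-)\psi\|^2=(|(\tilde\varphi_n,\psi_1)|^2+|(\tilde\varphi_n,\psi_3)|^2)\,(m^2\|\varphi_n\|^2+2n\|\varphi_{n-1}\|^2)$, and bounding $|(\tilde\varphi_n,\psi_j)|\le\|\varphi_n\|\|\psi_j\|$ gives a factor $\|\varphi_n\|^2\|\varphi_{n-1}\|^2\le\|Q_n\|^2$ rather than a ratio. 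This yields $\|H_\theta Q_n\|\le\sqrt2\,\lambda_n\|Q_n\|$ with no appeal to finer asymptotics. Alternatively, you may cite the sharper Davies--Kuijlaars asymptotics (which do force $\|\varphi_n\|/\|\varphi_{n-1}\|$ to be bounded), but then say so explicitly. Either way, the paper's route sidesteps the issue entirely because its bounds are symmetric in $\|\varphi_n\|^2$ and $\|\varphi_{n-1}\|^2$ from the start.
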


Here the self-adjoint realisation $\theta=0$ is not interesting,
for $\| \P_n^\pm \| = 1$ because the eigenfunctions of~$H_0$ 
form an orthonormal basis.
On the other hand,
whenever $\theta\not=0$, the blow-up of the spectral projectors 
implies that the eigenfunctions form neither Riesz nor Schauder basis. 

To further advocate the wild non-self-adjoint
behaviour of~$H_\theta$ whenever $\theta\not=0$,
let us recall the notion of the $\eps$-pseudospectrum ($\eps > 0$)
$$
  \sigma_\eps(H_\theta) := \sigma(H_\theta) \cup
  \left\{ z \not\in\sigma(H_\theta) : \
  \|(H_\theta-z)^{-1}\| > \frac{1}{\eps} 
  \right\}
  \,.
$$
Of course, this set is trivial if $\theta=0$,
in the sense that it is just the $\eps$-tubular neighbourhood  
of the spectrum $\sigma(H_\theta)$,
due to the self-adjointness implying an analogue of~\eqref{trivial}. 
On the other hand, whenever $\theta\not=0$,
we show that
there are points in $\sigma_\eps(H_\theta)$ lying 
``very far'' from the spectrum.
\begin{Theorem}\label{Thm.pseudo}
Given any $\delta>0$, there exist $C_1,C_2 > 0$ such that,
for all $\eps>0$, 
\begin{multline*}
  \left\{
  z \in \Com : \
  |z^2-m^2| \geq C_1 
  \ \land \
  |\arg (z^2 - m^2)| \leq \theta - \delta  
  \ \land \
  |z^2-m^2| > C_2 \, \log\frac{1}{\eps^2}
  \right\}
  \\
  \subset \sigma_\eps(H_\theta) \subset
  \\
  \left\{z \in \mathbb{C} : \ 
  \left(1 - {| \tan \mbox{$\frac{\theta}{2}$} |} \right) 
  {|\Im z|}
  \leq (|\Re z| + m) \,
  {|\tan\mbox{$\frac{\theta}{2}$}|} + \varepsilon\right\}
  \,.
\end{multline*}  
\end{Theorem}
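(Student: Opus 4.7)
The plan is to reduce both inclusions to corresponding statements for the Davies oscillator $S_\theta$ via the supersymmetric identity~\eqref{eq:square}. Since $(\alpha_1\alpha_2)^2=-I_{\Com^4}$, the Hermitian matrix $i\alpha_1\alpha_2$ is an involution, so $\Com^4 = V_+ \oplus V_-$ splits orthogonally into its $\pm 1$-eigenspaces, and on $\sii(\Real)\otimes V_\pm$ the operator $H_\theta^2-z^2$ acts as the scalar operator $S_\theta-\mu_\pm$, where $\mu_\pm := z^2-m^2\mp 1$. Moreover, since $\alpha_0$ anticommutes with $\alpha_1,\alpha_2,\alpha_3$, conjugation by $\alpha_0$ yields $\alpha_0 H_\theta \alpha_0 = -H_\theta$ and hence $\|(H_\theta-z)^{-1}\| = \|(H_\theta+z)^{-1}\|$ for every $z\notin\sigma(H_\theta)$.

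For the lower inclusion, the strategy is to lift a Davies-type pseudomode from $S_\theta$ to $H_\theta$. When $|\arg(z^2-m^2)|\leq\theta-\delta$ and $|z^2-m^2|$ is sufficiently large, $\mu_\pm$ lies in the open sub-sector $\{|\arg w|<\theta-\delta'\}$ for some $\delta'>0$. The classical exponential pseudomode construction for $S_\theta$, refined by Davies--Kuijlaars and Pravda-Starov, supplies a unit vector $\phi\in\sii(\Real)$ with $\|(S_\theta-\mu_\pm)\phi\|\leq Ce^{-c|\mu_\pm|}$. Lifting to $\Phi:=\phi\otimes v_\pm$ with a unit $v_\pm\in V_\pm$, the same exponential bound holds for $(H_\theta^2-z^2)\Phi$. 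Combined with the factorisation $(H_\theta^2-z^2)^{-1} = (H_\theta+z)^{-1}(H_\theta-z)^{-1}$ and the equality of resolvent norms above, this produces
\[
  \|(H_\theta-z)^{-1}\| \geq C^{-1/2} e^{c|z^2-m^2|/2},
\]
so $z\in\sigma_\varepsilon(H_\theta)$ whenever $|z^2-m^2|>C_2\log(\varepsilon^{-2})$ for a suitable $C_2$, establishing the left inclusion.

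For the upper inclusion, the plan is to exploit sectoriality of $S_\theta$. An elementary half-angle computation based on $z^2-m^2=(z-m)(z+m)$ shows that, for $z$ outside the right-hand wedge, $\mu_\pm$ lies at distance at least $\varepsilon$ (up to absolute constants) from the closed sector $\Sigma_\theta:=\{w:|\arg w|\leq|\theta|\}$ containing the numerical range of $S_\theta$. The factor $1-|\tan(\theta/2)|$ and the additive $m$ arise from translating the condition $|\arg\mu_\pm|>\theta$ back into coordinates of $z$ via the half-angle identity, while the additive $\varepsilon$ encodes the required $\varepsilon$-tubular thickening. Sectoriality of $S_\theta$ then gives $\|(S_\theta-\mu_\pm)^{-1}\|\leq \dist(\mu_\pm,\Sigma_\theta)^{-1}$, and invoking the splitting
\[
  (H_\theta-z)^{-1} = z\,(H_\theta^2-z^2)^{-1} + H_\theta\,(H_\theta^2-z^2)^{-1},
\]
the first term is controlled directly, and the second is handled via a sectorial functional calculus for $H_\theta^2$ built on the supersymmetric block structure, yielding a bound of order $\dist(\mu_\pm,\Sigma_\theta)^{-1/2}$.

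The hard part is the upper inclusion: bounding $\|H_\theta(H_\theta^2-z^2)^{-1}\|$ cannot be done by absorbing the (infinite) factor $\|H_\theta\|$, so one must extract exactly a half-power of the distance to $\Sigma_\theta$ from the supersymmetric functional calculus. This half-power is precisely what forces the half-angle $\theta/2$, rather than the full Davies angle $\theta$, to appear in the pseudospectral wedge; the $m$-term and the $\varepsilon$-term then follow naturally from the mass shift in $H_\theta^2=S_\theta+m^2+i\alpha_1\alpha_2$ and from the $\varepsilon$-definition of the pseudospectrum.
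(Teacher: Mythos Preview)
Your argument for the first inclusion is essentially the paper's: lift a pseudomode for $S_\theta$ to one for $H_\theta^2$ via~\eqref{eq:square}, factor $H_\theta^2-z^2=(H_\theta-z)(H_\theta+z)$, and use the $\alpha_0$-symmetry to equate the two resolvent norms. That part is fine.

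Your plan for the second inclusion, however, diverges from the paper and contains a real gap. The paper does \emph{not} pass through $H_\theta^2$ or the sectoriality of $S_\theta$ at all; it applies a Kato relative-boundedness argument directly to $H_\theta$. Writing $H_\theta-z=(A+m\alpha_3-z)+B$ with $A,B$ the symmetric and antisymmetric parts from~\eqref{eq:H_theta_AB}, the operator $A+m\alpha_3-\Re z$ is self-adjoint, so $\|(A+m\alpha_3-z)^{-1}\|\le 1/|\Im z|$, while the estimates~\eqref{ineq.A}--\eqref{ineq.B} give $\|B\psi\|\le|\tan\frac{\theta}{2}|\,\|(A-i\Im z)\psi\|$. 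Feeding this into the quantitative Kato bound (Theorem~IV.1.16 of Kato) yields exactly
\[
\|(H_\theta-z)^{-1}\|\le\Big[(1-|\tan\tfrac{\theta}{2}|)\,|\Im z|-|\tan\tfrac{\theta}{2}|\,(m+|\Re z|)\Big]^{-1},
\]
from which the stated wedge drops out immediately. The coefficients $1-|\tan\frac{\theta}{2}|$ and $|\tan\frac{\theta}{2}|$ come from the Kato bound, not from any half-angle formula for $\arg(z^2-m^2)$.

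Your route has two concrete problems. First, the ``elementary half-angle computation'' you invoke does not produce the stated wedge: for $m=0$ the condition $|\arg(z^2)|>\theta$ gives the cone $|\Im z|/|\Re z|>\tan\frac{\theta}{2}$, whereas the theorem's set has boundary slope $\tan\frac{\theta}{2}/(1-\tan\frac{\theta}{2})$, which is strictly larger. So translating ``$\mu_\pm$ lies outside $\Sigma_\theta$'' back to $z$ does not recover the right-hand side. Second, bounding $\|H_\theta(H_\theta^2-z^2)^{-1}\|$ by a ``sectorial functional calculus for $H_\theta^2$'' is not straightforward: $H_\theta$ does not commute with $i\alpha_1\alpha_2$ (indeed $[\alpha_1,\alpha_1\alpha_2]=2\alpha_2\neq0$), so $H_\theta$ is not block-diagonal in the $V_\pm$ decomposition and is not a function of $H_\theta^2$. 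One can still try to control $\|H_\theta\psi\|$ through $\|\partial_x\psi\|+\|x\psi\|$ and then through $S_\theta^{1/2}$, but the resulting constants depend on how close $\mu_\pm$ is to the sector boundary and will not reproduce the clean linear inequality in $|\Re z|,|\Im z|,m,\varepsilon$ that the theorem asserts.
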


Here the first inclusion is a consequence of~\eqref{eq:square}
and known estimates on the pseudospectra of~$S_\theta$.
As a consequence, we get the following relativistic analogue 
of the Davies' result~\eqref{Davies.wild}:
\begin{equation}\label{wild} 
  \lim_{r \to \pm\infty}
  \big\|\big(H_\theta \mp m -r e^{i\vartheta}\big)^{-1}\big\|  
  = \infty
  \qquad \mbox{whenever} \qquad
  \vartheta \in 
  (-\mbox{$\frac{\theta}{2}$},\mbox{$\frac{\theta}{2}$}) 
  \setminus \{0\}
  \,.
\end{equation}
The second inclusion demonstrates that the first inclusion
is {optimal for small values of~$\theta$}.  

A pictorial representation of the shape of the bounds 
of Theorem~\ref{Thm.pseudo} can be found in Figure~\ref{fig:pseudobounds}.
The region displayed in light gray corresponds to 
the second inclusion of Theorem~\ref{Thm.pseudo}
for $\varepsilon=0.01$, $m=2$ and $\theta = \frac{\pi}{4}$.
The region in dark gray represents 
the first inclusion of Theorem~\ref{Thm.pseudo}
for the same choice of parameters. 

  \begin{figure}[h!]
    \begin{center}
      \includegraphics[width=0.8\textwidth]{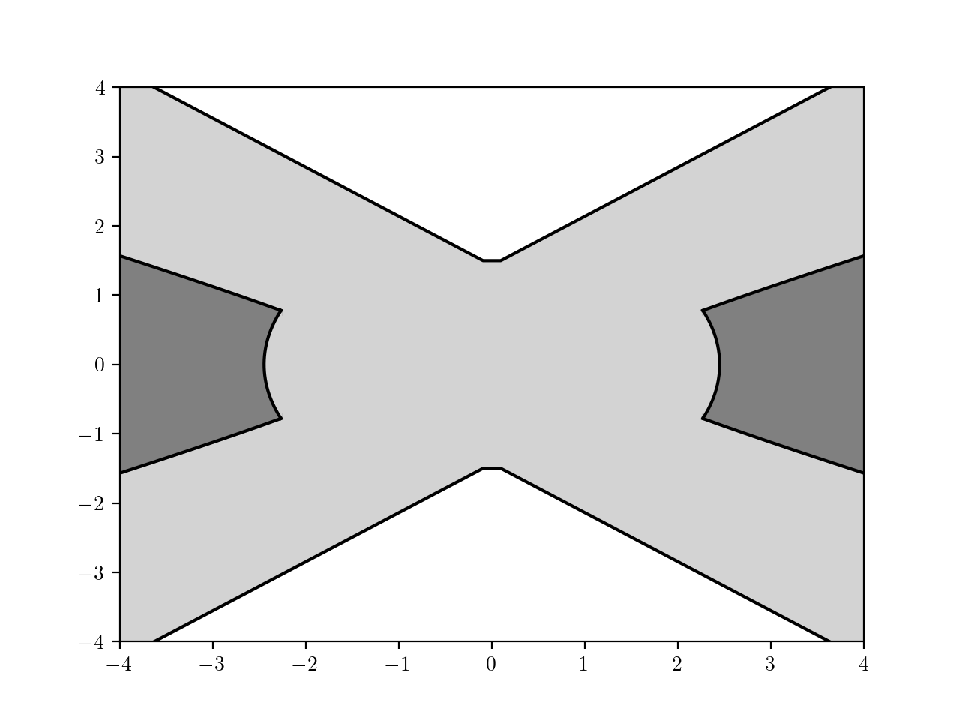}
    \end{center}
    \caption{Representation of a region in the complex plain containing $\sigma_\varepsilon(H_\theta)$ (light gray) and a region definitely contained in $\sigma_\varepsilon(H_\theta)$ (dark gray)
    for $\varepsilon=0.01$, $m=2$ and $\theta = \frac{\pi}{4}$.}\label{fig:pseudobounds}
  \end{figure}

The numerically computed pseudospectra of $H_\theta$ 
for $m=1$ and $\theta = \frac{\pi}{4}$
are depicted in Figure~\ref{fig:pseudospectrum} 
together with the bounds obtained from 
the second inclusion of Theorem~\ref{Thm.pseudo}.
{The (lack of) optimality of Theorem~\ref{Thm.pseudo}
is discussed in more detail at the end of Section~\ref{Sec.pseudo},
see Figure~\ref{Fig.bounds}.}

\begin{figure}[h!]
    \begin{center}
      \includegraphics[width=0.8\textwidth]{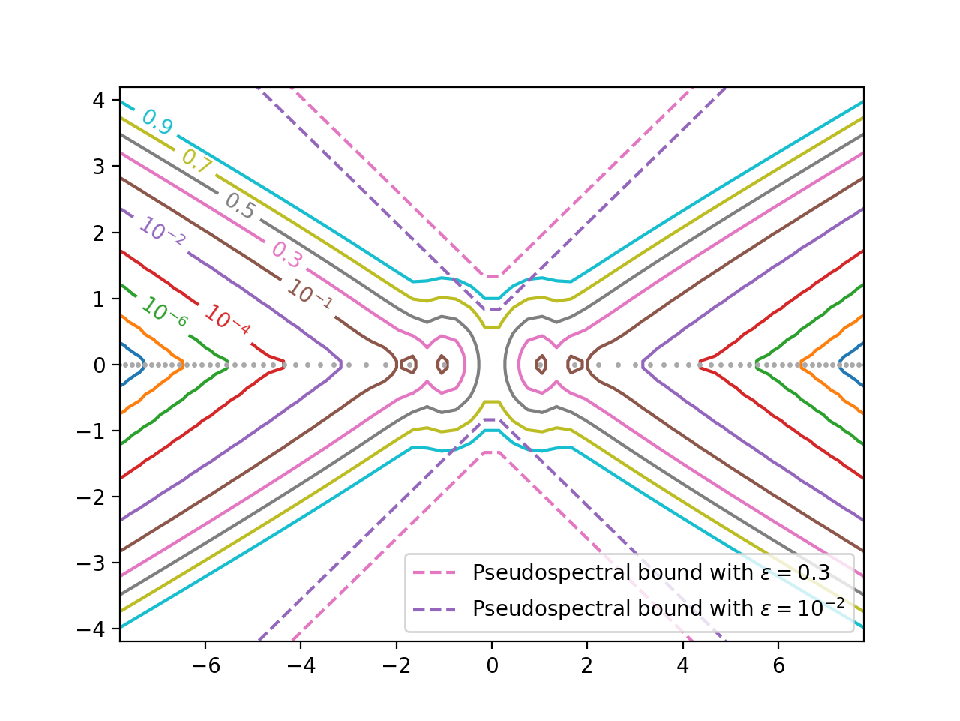}
    \end{center}
    \caption{
      Numerical calculation of the pseudospectrum 
      of $\sigma_\varepsilon(H_\theta)$
      for $m=1$ and $\theta = \frac{\pi}{4}$ and different values of $\varepsilon$.
      The gray dots represent the spectrum, 
      while the dashed lines represent the bounds obtained from 
the second inclusion of Theorem~\ref{Thm.pseudo}
    for different values of $\varepsilon$.
    }
    \label{fig:pseudospectrum}
  \end{figure}

The organisation of this paper is as follows.
In the preliminary Section~\ref{Sec.pre}, 
we provide explicit formulae for the Dirac matrices
and collect known facts about 
the non-relativistic (Schr\"odinger) operator~$S_\theta$.
In Section~\ref{Sec.def}, we define 
the relativistic (Dirac) operator~$H_\theta$
as a closed operator with compact resolvent.
In Section~\ref{Sec.spec}, we determine the eigenvalues
and eigenfunctions of~$H_\theta$ and establish Theorem~\ref{Thm.proj}.
Theorem~\ref{Thm.pseudo} is proved in Section~\ref{Sec.pseudo}.

\section{Preliminaries}\label{Sec.pre}
%
In this section we are going to introduce notation and some results that we shall need for the definition 
of the operator~$H_\theta$ from~\eqref{operator}.

\subsection{The Dirac matrices}
The Dirac matrices appearing in~\eqref{operator}
explicitly read (standard representation)
\begin{equation}\label{eq:alphamatrices}
  \alpha_k := 
  \begin{pmatrix}
    0 & \sigma_k \\
    \sigma_k & 0
  \end{pmatrix} 
  \quad \mbox{with} \quad 
  k \in \{1,2,3\}
  \qquad \mbox{and} \qquad
  \alpha_0 :=
   \begin{pmatrix}
    \sigma_0 & 0 \\
    0 & -\sigma_0
  \end{pmatrix} 
  \,,
\end{equation}
where
\begin{equation}\label{eq:sigmamatrices}
  \sigma_1 := 
  \begin{pmatrix}
    0 & 1 \\
    1 & 0
  \end{pmatrix} 
  \,, \qquad
  \sigma_2 := 
  \begin{pmatrix}
    0 & -i \\
    i & 0
  \end{pmatrix} 
  \,, \qquad
  \sigma_3 := 
  \begin{pmatrix}
    1 & 0 \\
    0 & -1
  \end{pmatrix} 
  \,, \qquad
  \sigma_0 := 
  \begin{pmatrix}
    1 & 0 \\
    0 & 1
  \end{pmatrix} 
  = I_{\Com^2}
  \,.
\end{equation}
are the Pauli matrices.
The Dirac (as well as Pauli) matrices are Hermitian 
and the anticommutation relations
$
  \{\alpha_\mu, \alpha_\nu\} 
  := \alpha_\mu\alpha_\nu + \alpha_\nu\alpha_\mu  =2 \delta_{\mu\nu}I_{\Com^4}
$ 
hold
with every $\mu,\nu \in \{0,1,2,3\}$.
These anticommutation properties lead to~\eqref{eq:square},
where the matrix $ i \alpha_1\alpha_2$ reads
\begin{equation}\label{eq:involutionmatrix}
  i \alpha_1\alpha_2  = 
  \begin{pmatrix}
    -\sigma_3 & 0  \\
    0 & -\sigma_3
  \end{pmatrix} 
  \,.
\end{equation}

\subsection{The non-relativistic harmonic oscillator}
In view of~\eqref{eq:square}, 
the Schr\"odinger operator~$S_\theta$ from~\eqref{Davies} 
plays a crucial role in the present analysis.
Let us collect from \cite[Sec.~VII.D]{KSTV}
some basic properties of~$S_\theta$ needed later.

Equipping~$S_\theta$ with the {$\theta$-independent} domain
\begin{equation}\label{eq:domainStheta}
  \Dom({S_\theta}) := 
  H^2(\Real) \cap \sii(\Real,x^4 \, \der x)
  \,,
\end{equation}
it is realised as an m-sectorial operator with compact resolvent
provided that $|\theta| < \pi/2$. 
It is self-adjoint if, and only if, $\theta=0$.
Indeed, the adjoint operator satisfies $S_\theta^* = S_{-\theta}$.
The numerical range of~$S_\theta$ lies in the symmetric sector
$
  \{z \in \Com: \
  |\Im z| \leq |\tan\theta| \, \Re z 
  \ \land \
  \Re z \geq 0
  \}
$.
The form domain of this operator is 
\begin{equation}\label{eq:formdomain}
  \mathcal{D} := 
  H^1(\Real) \cap \sii(\Real,x^2 \, \der x)
  \,.
\end{equation}

The spectrum of~$S_\theta$ is purely discrete and coincides with 
the spectrum~\eqref{spectrum0}
of the unrotated operator~\eqref{harmonic}, 
\begin{equation}\label{spectrum.harmonic}
  \sigma(S_\theta) = \sigma(S_0) = \{2n+1\}_{n \in \Nat}
  \,.
\end{equation}
(The set of natural numbers~$\Nat$ contains zero in our convention.) 
The eigenvalues are (algebraically) simple.

The eigenfunctions are the complex rotation of the eigenfunctions 
of the self-adjoint harmonic oscillator~$S_0$. 
More specifically, let 
$
  \{\phi_n\}_{n\in\Nat}\subset \Dom(S_0)
$ 
be the orthonormal basis of $\sii(\Real)$
formed by the eigenfunctions of~$S_0$, 
\ie, $S_0\phi_n=(2n+1)\phi_n$. 
Then, for every $n \in \Nat$,
\begin{equation}\label{rotated_eigenfunction}
\begin{aligned}
  S_\theta\varphi_n&=(2n+1)\varphi_n
  \qquad \mbox{with} \qquad
  & \varphi_n(x) &:= \phi_n(e^{i\theta/2}x)
  \,{,}
  \\
  S_\theta^*\tilde\varphi_n&=(2n+1)\tilde\varphi_n
  \qquad \mbox{with} \qquad
  & \tilde\varphi_n(x) &:= \phi_n(e^{-i\theta/2}x)
  \,.
\end{aligned}  
\end{equation}
Moreover, $\{\varphi_n,\tilde\varphi_n\}_{n \in \Nat}$ 
is a biorthonormal basis of $\sii(\Real)$,
\ie, $(\tilde{\varphi}_n,\varphi_m) = \delta_{nm}$
for every $n,m \in \Nat$. 

Despite the fact that the eigenvalues remain real,
the eigenfunctions of~$S_\theta$ have wild basis properties
unless $\theta = 0$.
Indeed, defining the spectral projectors 
$P_n := \varphi_n (\tilde\varphi_n,\cdot)$,
it follows that   
\begin{equation}\label{projectors}
  \lim_{n\to\infty} \frac{\log \|P_n\|}{n}
  = \lim_{n\to\infty} \frac{\log \|\varphi_n\|^2}{n}
  = \log \sqrt{ \frac{1+|\sin\theta|}{1-|\sin\theta|} }
  \,.
\end{equation}
Consequently, the eigenfunctions of~$S_\theta$ with $\theta\not=0$
form neither Riesz nor Schauder basis.

Similarly, the pseudospectra of~$S_\theta$ 
are highly non-trivial unless $\theta = 0$.
Indeed, given any $\delta>0$, there exist $C_1,C_2 > 0$ such that,
for all $\eps>0$, 
\begin{equation}\label{pseudo}
  \left\{
  z \in \Com : \
  |z| \geq C_1 
  \ \land \
  |\arg (z)| \leq \theta - \delta  
  \ \land \
  |z| > C_2 \, \log\frac{1}{\eps}
  \right\}
  \subset \sigma_\eps(S_\theta)
  \,.
\end{equation}  
{In particular, the wild behaviour~\eqref{wild} follows.
Moreover, it is known 
that this inclusion is optimal in the sense 
that $|\vartheta| = |\theta|$ is the transition angle
for the pseudospectral properties of~$S_\theta$.
Indeed,
\begin{equation}\label{Davies.nowild} 
  \limsup_{r \to +\infty}
  \big\|\big(S_\theta -r e^{i\vartheta}\big)^{-1}\big\|  
  < \infty
  \qquad \mbox{whenever} \qquad
  |\vartheta| \in \big[|\theta|,\mbox{$\frac{\pi}{2}$}\big)  
  \,.
\end{equation}
We refer to Boulton's conjecture \cite{Boulton_2002},
its proof in \cite{Pravda-Starov_2006},
book \cite[Prop.~14.13]{Helffer}
and the general approach~\cite{Arnal-Siegl_2023}.}
 
\section{The relativistic harmonic oscillator}\label{Sec.def}
\subsection{Definition}
To introduce~$H_\theta$ as a closed operator with non-empty 
resolvent set, we set for a moment $m=0$ and write
\begin{equation}\label{eq:H_theta_AB}
  H_\theta = A + B
  \,,
  \qquad \mbox{where} \qquad
\begin{aligned}
  A &:= \cos(\mbox{$\frac{\theta}{2}$}) \,
  (- i \alpha_1 \partial_x - \alpha_2 x)
  \,, \qquad
  \\
  B &:= \sin(\mbox{$\frac{\theta}{2}$}) \,
  (- \alpha_1 \partial_x - i\alpha_2 x)
  \,,
\end{aligned}
\end{equation}
are the symmetric and anti-symmetric parts of~$H_\theta$, respectively. 

\begin{Lemma}
$A$ with $\Dom(A) := C_0^\infty(\Real)^4$ is essentially self-adjoint.  
\end{Lemma}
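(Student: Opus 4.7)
The plan is to invoke the general ``square root'' criterion: if $A$ is symmetric on a domain $D$ satisfying $A(D)\subset D$, and if $A^2$ (with domain $D$) is essentially self-adjoint, then $A$ itself is essentially self-adjoint. Taking $D := C_0^\infty(\Real)^4$, symmetry of $A$ follows from the Hermiticity of $\alpha_1,\alpha_2$ together with the symmetry of $-i\partial_x$ and multiplication by~$x$ on $C_0^\infty(\Real)$, while the invariance $A(D)\subset D$ is immediate from the stability of $C_0^\infty(\Real)$ under $\partial_x$ and multiplication by polynomials.

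The key algebraic computation exploits $\alpha_1^2=\alpha_2^2=I_{\Com^4}$, $\{\alpha_1,\alpha_2\}=0$, and $\partial_x x - x \partial_x = 1$ to give
\begin{equation*}
  A^2 = \cos^2(\mbox{$\frac{\theta}{2}$})\left[(-\partial_x^2 + x^2)\,I_{\Com^4} + i\alpha_1\alpha_2\right].
\end{equation*}
The first-order cross term $i\alpha_1\alpha_2\,\partial_x x + i\alpha_2\alpha_1\, x\partial_x$ collapses to the constant matrix $i\alpha_1\alpha_2$ because $\alpha_2\alpha_1 = -\alpha_1\alpha_2$. Since the scalar harmonic oscillator $S_0 = -\partial_x^2 + x^2$ is classically essentially self-adjoint on $C_0^\infty(\Real)$, its $\Com^4$-valued realisation is essentially self-adjoint on $D$, and the bounded self-adjoint perturbation by the Hermitian matrix $i\alpha_1\alpha_2$ preserves this property.

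To close the argument, suppose $u\in\Dom(A^*)$ satisfies $A^*u = \pm iu$. For any $\phi\in D$ we have $A\phi\in D$, so iterating the adjoint identity yields $\langle u, A^2\phi\rangle = -\langle u,\phi\rangle$ for every $\phi\in D$; hence $u\in\Dom(\overline{A^2})$ with $\overline{A^2} u = -u$. On the other hand, $\langle A^2\phi,\phi\rangle = \|A\phi\|^2 \geq 0$ for every $\phi\in D$ by symmetry of $A$, whence $\overline{A^2}$ is a nonnegative self-adjoint operator and $-1$ lies outside its spectrum, forcing $u=0$. The only genuine obstacle is the algebraic verification of the $A^2$ identity in the second paragraph; the invariance $A(D)\subset D$ is precisely what permits one to ``take a square root'' of the essential self-adjointness of $A^2$.
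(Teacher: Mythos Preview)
Your proof is correct and takes a genuinely different route from the paper's. The paper argues directly at the level of deficiency indices: it first runs an elliptic-regularity bootstrap to show that any $L^2$ solution of $(A\pm i)\Psi=0$ lies in $C^\infty(\Real)^4$, and then applies a cutoff argument \`a la Thaller (multiplying by $f(x/n)$ and using $\|f_n\Psi\|^2 \leq \|(A\pm i)(f_n\Psi)\|^2 = O(1/n^2)$) to force $\Psi=0$. You instead exploit the supersymmetric identity $A^2 = \cos^2(\frac{\theta}{2})\big[S_0\,I_{\Com^4}+i\alpha_1\alpha_2\big]$, reduce to the classical essential self-adjointness of~$S_0$ on $C_0^\infty(\Real)$, and lift the conclusion back to~$A$ via the elementary observation that a vector in $\ker(A^*\mp i)$ would produce an eigenvector of the nonnegative self-adjoint operator $\overline{A^2}$ at eigenvalue~$-1$. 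Your approach is shorter and more structural here---it reuses exactly the algebraic relation~\eqref{eq:square} that drives the rest of the paper---whereas the paper's argument is self-contained (it does not invoke the $S_0$ result) and would transfer to Dirac-type operators whose squares are not so cleanly identifiable with a known essentially self-adjoint operator.
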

\begin{proof}
Since the factor $\cos(\mbox{$\frac{\theta}{2}$})$ 
plays no role in the property, we omit it throughout this proof.

We start with an elliptic regularity argument. 
Let $\Omega\subset\R$ be a bounded open set. 
Assume that $\Psi\in\sii(\R)^4$ and that $(A+i)\Psi=0$. Then, by the properties of the $\alpha$ matrices, it follows that
\begin{equation*}
-i\alpha_1\partial_x\Psi-\alpha_2x\Psi+i\Psi=0
\quad\Leftrightarrow\quad
\partial_x\Psi-i\alpha_1\alpha_2x\Psi-\alpha_1\Psi=0
\quad\Leftrightarrow\quad
\partial_x\Psi = (i\alpha_1\alpha_2x+\alpha_1)\Psi
\,.
\end{equation*}
Notice that for any $x\in\Omega$, 
$i\alpha_1\alpha_2x+\alpha_1\in \C^{4 \times 4}$ is a Hermitian matrix. 
It follows that
\begin{equation*}
\norm{\partial_x\Psi} 
    = \norm{(i\alpha_1\alpha_2x+\alpha_1)\Psi}
    \leq \norm{x\Psi} + \norm{\Psi}
    \leq K\,\norm{\Psi}
\end{equation*}
with a positive constant~$K$,
where we have used the boundedness of~$\Omega$. 
Hence, since $\Psi\upharpoonright\Omega \in \sii(\Omega)^4$, 
the above inequality shows that 
$\Psi\upharpoonright\Omega\in H^1(\Omega)^4$ 
and therefore $\Psi\in H^1_{\mathrm{loc}}(\R)^4$. 

Now suppose, in addition, that $\Psi\in H^k_{\mathrm{loc}}(\R)^4$
with $k \in \Nat$. 
Then
\begin{equation*}
\begin{aligned}
\norm{\partial^{k+1}_x\Psi} 
    = \norm{\partial^{k}_x(i\alpha_1\alpha_2x\Psi)+\alpha_1\partial^{k}_x\Psi}
    \leq \norm{x\partial^{k}_x\Psi} + k \, \norm{\partial^{k-1}_x\Psi} + \norm{\partial^{k}_x\Psi}
    \leq K\ (\norm{\partial^k_x\Psi} + \norm{\Psi})
\end{aligned}
\end{equation*}
with another positive constant~$K$, 
where we have used that 
$\norm{\partial^{k-1}_x\Psi}\leq C \, (\norm{\partial^k_x\Psi} + \norm{\Psi})$
with a positive constant~$C$. 
Therefore $\Psi\in H^k_{\mathrm{loc}}(\R)^4$ implies that 
$\Psi\in H^{k+1}_{\mathrm{loc}}(\R)^4$. By induction we get that 
$\Psi\in H^k_{\mathrm{loc}}(\R)^4$ for any $k\in\mathbb{N}$ 
and therefore $\Psi\in C^\infty(\R)^4$ by the Sobolev embedding. 

We continue now by an argument analogous to the proof of \cite[Thm.~4.3]{Thaller}.
Let $f\in C^\infty_0(\R)$ be such that $f(x) = 1$ for $|x| \leq 1$ 
and let $f_n(x) = f(x/n)$ with $n \geq 1$. 
From the elliptic regularity we have that $\Psi\in\sii(\R)^4$ and $(A+i)\Psi=0$ imply that $\Psi\in C^\infty(\R)^4$. It follows that:
\begin{equation*}
(A+i)(f_n\Psi)(x) 
= -i f_n'(x) \alpha_1\Psi(x) 
= -\frac{i}{n} f'(x/n) \alpha_1\Psi(x).
\end{equation*}
It holds that $\displaystyle \lim_{n\to \infty} \norm{f_n\Psi - \Psi} =0$. 
Since $f_n\Psi\in C^\infty_0(\R)$ and $A$ is a symmetric operator 
on this set, one gets
$$
  \norm{f_n\Psi}^2
  \leq  \norm{f_n\Psi}^2 +\norm{A(f_n\Psi)}^2 
  = \norm{(A+i)(f_n\Psi)}^2
  \leq \frac{1}{n^2} \, \|f'\|_\infty^2 \, \norm{\Psi}^2
  \,.
$$ 
Consequently, $\displaystyle \lim_{n\to \infty} \norm{f_n\Psi} =0$ 
and therefore $\Psi=0$. An analogous argument for $(A-i)\Psi=0$ shows that both deficiency indices vanish, from which the essential self-adjointness follows.
\end{proof}

Let us keep the same notation~$A$ for
the (self-adjoint) closure of~$A$ initially defined on $C_0^\infty(\Real)^4$.  
Integrating by parts, one can check the identity
\begin{equation*}
  \|A\psi\|^2 =  \cos^2(\mbox{$\frac{\theta}{2}$}) \,
  \left(
  \|\psi'\|^2 + \|x\psi\|^2 + \widetilde{\|\psi\|^2}
  \right)
\end{equation*}
for every $\psi \in C_0^\infty(\Real)^4$,
where
$
  \widetilde{\|\psi\|^2}
  := -\|\psi_1\|^2 + \|\psi_2\|^2 - \|\psi_3\|^2 + \|\psi_4\|^2 
$.
Taking $k \in \Real$, one has 
\begin{equation}\label{ineq.A}
  \|(A+ik)\psi\|^2 = \|A\psi\|^2 + k^2 \|\psi\|^2    
  \begin{cases}
    \leq  \|\psi'\|^2 + \|x\psi\|^2 + (k^2+1) \|\psi\|^2 
    \,,
    \\
    \geq \cos^2(\mbox{$\frac{\theta}{2}$}) \,
  \left(
  \|\psi'\|^2 + \|x\psi\|^2 
  \right)
  + \big( k^2 - \cos^2(\mbox{$\frac{\theta}{2}$}) \big) \,  \|\psi\|^2 
  \,.
  \end{cases} 
\end{equation}
It follows that the graph norm of $A+ik$ is equivalent 
to the graph norm of~$S_0^{1/2}$ whenever 
$k^2 \geq \cos^2(\mbox{$\frac{\theta}{2}$})$ and $|\theta| < \pi$. 
Consequently, $\Dom(A) = \mathcal{D}^4$ with $\mathcal{D}$ 
{being} the {$\theta$-independent} form domain of~{$S_\theta$} 
given in~{\eqref{eq:formdomain}}.
Moreover, $A+ik : \Dom(A) \to \sii(\Real)^4$ is bijective
under these conditions.
Similarly,
\begin{equation}\label{ineq.B}
\begin{aligned}
  \|B\psi\|^2 
  &=  \sin^2(\mbox{$\frac{\theta}{2}$}) \,
  \left(
  \|\psi'\|^2 + \|x\psi\|^2 - \widetilde{\|\psi\|^2}
  \right)
  \\
  &\leq
  \sin^2(\mbox{$\frac{\theta}{2}$}) \,
  \left(
  \|\psi'\|^2 + \|x\psi\|^2  
  \right)
  + \sin^2(\mbox{$\frac{\theta}{2}$}) \, \|\psi\|^2  
  \,.
\end{aligned}  
\end{equation}
Comparing the lower bound of~\eqref{ineq.A} 
with the estimate~\eqref{ineq.B},
it follows that 
$\|B\psi\| \leq b \, \|(A+ik)\psi\|$ with $b < 1$ 
provided that 
\begin{equation*}
  \sin^2(\mbox{$\frac{\theta}{2}$}) < \cos^2(\mbox{$\frac{\theta}{2}$}) 
  \qquad \mbox{and} \qquad
  \sin^2(\mbox{$\frac{\theta}{2}$}) < k^2 - \cos^2(\mbox{$\frac{\theta}{2}$})
  \,.
\end{equation*}
These conditions are equivalent to $|\theta| < \frac{\pi}{2}$ and $k^2 > 1$,
respectively.
By \cite[Thm.~IV.1.16]{Kato}, it follows that $A+ik+B$ with 
$\Dom(A+B) := \Dom(A) = \mathcal{D}^4$
is closed and bijective with compact inverse. 
Since adding~$ik$ as well as $m\alpha^3$ represent 
just bounded perturbations, 
we conclude with the following result 
(where~$m$ is possibly non-zero).

\begin{Theorem}\label{Thm.closed}
If $|\theta| < \frac{\pi}{2}$, then $H_\theta$ 
with $\Dom(H_\theta) := \mathcal{D}^4$
is a closed operator with compact resolvent.  
\end{Theorem}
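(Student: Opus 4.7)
The plan is to decompose $H_\theta = A + B + m\alpha_3$, where $A$ and $B$ are the symmetric and antisymmetric parts \eqref{eq:H_theta_AB} of the massless Dirac operator, and then apply a Kato--Rellich type perturbation argument. The mass term $m\alpha_3$ is a bounded self-adjoint perturbation on all of $\sii(\R)^4$, so it affects neither closedness nor compactness of the resolvent and can be absorbed at the end; the real work is in the massless case.

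The crux is to prove that $B$ is $A$-bounded with relative bound strictly less than one, after a suitable imaginary shift. Integration by parts on $C_0^\infty(\Real)^4$, using the anticommutation relations to unwind cross terms, produces clean identities of the form
\begin{equation*}
  \|A\psi\|^2 = \cos^2(\mbox{$\frac{\theta}{2}$})\bigl(\|\psi'\|^2+\|x\psi\|^2+\widetilde{\|\psi\|^2}\bigr),
  \qquad
  \|B\psi\|^2 = \sin^2(\mbox{$\frac{\theta}{2}$})\bigl(\|\psi'\|^2+\|x\psi\|^2-\widetilde{\|\psi\|^2}\bigr),
\end{equation*}
where $\widetilde{\|\psi\|^2}$ is a diagonal quadratic form in $\psi$ bounded by $\|\psi\|^2$. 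These have matching positive parts but opposite diagonal corrections; hence adding $ik$ to $A$ (which boosts $\|A\psi\|^2$ by $k^2\|\psi\|^2$ without touching $B$) yields $\|B\psi\|\le b\,\|(A+ik)\psi\|$ with $b<1$ provided $\sin^2(\theta/2)<\cos^2(\theta/2)$ and $k$ is taken large enough, \ie, exactly under the hypothesis $|\theta|<\pi/2$.

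With this relative bound in hand, the preceding lemma makes $A$ self-adjoint on its closure domain, and \cite[Thm.~IV.1.16]{Kato} then gives that $A+ik+B$ is closed on $\Dom(A)$ and boundedly invertible via the Neumann series $(A+ik+B)^{-1}=(A+ik)^{-1}\sum_{j\ge 0}(-B(A+ik)^{-1})^j$. The compactness of $(H_\theta-z)^{-1}$ for some $z$ (and hence for all $z$ in the resolvent set) reduces to the compactness of $(A+ik)^{-1}$, which in turn follows from the compact embedding $\mathcal{D}\hookrightarrow\sii(\R)$ of the form domain \eqref{eq:formdomain} into $\sii(\R)$, a standard fact for the harmonic oscillator. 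Finally, identifying $\Dom(A)$ with $\mathcal{D}^4$ comes from the two-sided bound~\eqref{ineq.A}, which shows that the graph norm of $A+ik$ is equivalent to the graph norm of $S_0^{1/2}$ on the core $C_0^\infty(\R)^4$ and hence on the closure.

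The only genuinely delicate step I anticipate is the essential self-adjointness of $A$ on $C_0^\infty(\R)^4$, already handled in the preceding lemma via elliptic regularity plus a cut-off argument à la Thaller; once that is accepted, the rest is bookkeeping. Bundling the bounded perturbations $ik$ and $m\alpha_3$ back in preserves the domain $\mathcal{D}^4$, closedness, and compactness of the resolvent, yielding Theorem~\ref{Thm.closed}.
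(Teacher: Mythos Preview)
Your proposal is correct and follows essentially the same approach as the paper: the decomposition $H_\theta = A + B + m\alpha_3$, the integration-by-parts identities for $\|A\psi\|^2$ and $\|B\psi\|^2$, the imaginary shift $A+ik$, the relative bound via \cite[Thm.~IV.1.16]{Kato}, and the identification $\Dom(A)=\mathcal{D}^4$ through~\eqref{ineq.A} all match. If anything, you are slightly more explicit than the paper in tracing the compactness of the resolvent back to the compact embedding $\mathcal{D}\hookrightarrow L^2(\R)$.
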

\begin{Remark}
The exclusion of the points $\theta = \pm \frac{\pi}{2}$ is reasonable,
because then even the Schr\"odinger operator $S_\theta$ is not well defined. 
On the other hand, the other points $\theta \in (\frac{\pi}{2},\frac{3\pi}{2})$ 
can be included by noticing the similarity relation
$\alpha_1 H_\theta \alpha_1 = i H_{\theta+\pi}$
valid for $m=0$.
From now on, we restrict to $|\theta| < \frac{\pi}{2}$,
without loss of generality.
\end{Remark}

\subsection{The adjoint operator}
A straightforward computation shows that
the formal adjoint of~$H_\theta$ is given by $H_{-\theta}$.
In this subsection, we prove that, indeed,
$
  H_\theta^* = H_{-\theta} 
$.
For doing so, we first establish the following result.

\begin{Lemma}\label{Lem.resolvent}
  $z\in\rho(H_{\theta})$ if, and only if, $\bar{z}\in\rho(H_{-\theta})$.
\end{Lemma}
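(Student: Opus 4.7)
The plan is to construct an antilinear isometric involution $T$ on $L^2(\R)^4$ that intertwines $H_\theta$ with $H_{-\theta}$, and then read off the resolvent equivalence as an automatic consequence of the antilinearity.

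Concretely, set $T\psi := \alpha_3\,\overline{\psi}$, where the bar denotes componentwise complex conjugation. Since $\alpha_3$ has real entries and satisfies $\alpha_3^2 = I_{\Com^4}$, the map $T$ is an antilinear involution ($T^2 = I$) and an isometry on $L^2(\R)^4$. Moreover, $T$ preserves the common $\theta$-independent domain $\mathcal D^4 = \Dom(H_\theta) = \Dom(H_{-\theta})$ furnished by Theorem~\ref{Thm.closed}.

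The main step is to verify the algebraic identity $T H_\theta T^{-1} = H_{-\theta}$ on $\mathcal D^4$. This is a term-by-term calculation relying on (i) the reality types $\overline{\alpha_1} = \alpha_1$, $\overline{\alpha_3} = \alpha_3$ and $\overline{\alpha_2} = -\alpha_2$, which follow from the explicit form~\eqref{eq:alphamatrices}--\eqref{eq:sigmamatrices}; (ii) the identities $\alpha_3\alpha_k\alpha_3 = -\alpha_k$ for $k\in\{1,2\}$ extracted from $\{\alpha_\mu,\alpha_\nu\} = 2\delta_{\mu\nu} I_{\Com^4}$; and (iii) the effect of antilinearity on the scalar prefactors, namely that complex conjugation flips the sign of $i$ and sends $e^{\pm i\theta/2}$ to $e^{\mp i\theta/2}$. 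Combining these three ingredients, each of the three summands of $H_\theta$ transforms into the corresponding summand of $H_{-\theta}$, the mass term $m\alpha_3$ being preserved because $\alpha_3$ trivially commutes with itself.

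Once the intertwining is in hand, the lemma follows routinely: since $T$ is antilinear, it sends multiplication by $z\in\Com$ to multiplication by $\bar z$, hence
\[
  T(H_\theta - z)T^{-1} = H_{-\theta} - \bar z
  \qquad \text{on } \mathcal D^4.
\]
As $T$ is an isometric bijection of $L^2(\R)^4$ mapping $\mathcal D^4$ onto itself, $H_\theta - z\colon \mathcal D^4 \to L^2(\R)^4$ is bijective if and only if the same holds for $H_{-\theta} - \bar z$, and the boundedness of the inverse is automatic from closedness via the closed graph theorem.

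The principal obstacle is the careful bookkeeping in the intertwining step, where the three sources of signs (the imaginary unit $i$, the phases $e^{\pm i\theta/2}$, and the differing reality type of $\alpha_1$, $\alpha_2$, $\alpha_3$) must be combined with the commutation and anticommutation properties of $\alpha_3$ with the other Dirac matrices so that all signs conspire correctly; the remainder of the argument is abstract nonsense about antilinear bijections.
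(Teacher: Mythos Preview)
Your proof is correct and takes a genuinely different route from the paper's. The paper argues directly: assuming $z\in\rho(H_\theta)$, it uses only the formal duality $((H_{-\theta}-\bar z)\Psi,\Phi)=(\Psi,(H_\theta-z)\Phi)$ on the common domain $\mathcal D^4$ to prove, in three separate steps, that $H_{-\theta}-\bar z$ is injective, has dense range, and is surjective (the last via a Cauchy-sequence argument based on the lower bound $\|(H_\theta-z)\xi\|\geq K\|\xi\|$). Your approach, by contrast, exhibits an explicit antilinear isometric involution $T=\alpha_3\mathcal K$ satisfying $TH_\theta T^{-1}=H_{-\theta}$ (I have checked the bookkeeping; the signs do conspire), after which the resolvent equivalence is immediate. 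Your method is shorter and yields the extra information $\|(H_\theta-z)^{-1}\|=\|(H_{-\theta}-\bar z)^{-1}\|$; it is in the same spirit as the $\mathcal C$-self-adjointness~\eqref{C-symmetry} that the paper records only \emph{after} proving this lemma and Theorem~\ref{Thm.adjoint}, though with a different conjugation ($\alpha_3\mathcal K$ rather than $\mathcal P\mathcal K$). The paper's argument, on the other hand, is more portable: it needs no special symmetry and would apply verbatim to any pair of closed operators on a common domain that are formal adjoints of each other.
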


\begin{proof}
It is enough to prove the implication $z\in\rho(H_{\theta}) \Rightarrow \bar{z}\in\rho(H_{-\theta})$. 
Take $z\in\rho(H_{\theta})$ and suppose that $\Psi \in \ker(H_{-\theta}-\bar{z})$. Then 
$$\left((H_{-\theta}-\bar{z})\Psi,\Phi\right) = 0 \,,
\quad\forall \Phi \in \Dom(H_\theta) \,.$$
Therefore 
$$\left(\Psi,(H_{\theta}-{z})\Phi\right) = 0 \,,
\quad\forall \Phi \in \Dom(H_\theta) \,,$$
and since $H_{\theta}-{z}$ is surjective, we get that $\Psi=0$, which proves that $H_{-\theta}-\bar{z}$ is injective. 

Now we prove that its range is dense. Let $\Psi\in \operatorname{ran}(H_{-\theta}-\bar{z})^{\bot}\cap\Dom({H_\theta})$. Then we have that 
$$((H_\theta-z)\Psi,\Phi)=0 \,, \quad \forall \Phi\in\Dom({H_{-\theta}}) \,.$$
Since $\Dom(H_{-\theta})$ is a dense subset this implies that $\Psi\in\ker{(H_\theta-z)}= \{0\}$. 

Finally, we prove the surjectivity. Let $\Psi\in\overline{\operatorname{ran}(H_{-\theta}-\bar{z})}$. Then there exists a sequence $\{\Phi_n\}_{n\in\Nat}\subset{\Dom(H_{-\theta})}$ such that $\norm{\Psi-(H_{-\theta}-\bar{z})\Phi_n}\to0$ as $n \to \infty$. 
Taking into account that $H_\theta -z$ is bounded away from the origin, 
\ie, there exists $K>0$ such that $\norm{(H_\theta -z)\xi} \geq K \norm{\xi}$ for all $\xi\in\Dom(H_\theta)$, we have that 
$$
  \norm{\Phi_n-\Phi_m} = \sup_{\xi\in\Dom(H_\theta)}\frac{|(\Phi_n-\Phi_m,(H_\theta-z)\xi)|}{\norm{(H_\theta-z)\xi}} \leq \frac{1}{K} \,\norm{(H_{-\theta}-\bar{z})(\Phi_n-\Phi_m)}.
$$
Hence $\{\Phi_n\}_{n \in \Nat}$ is a convergent sequence. 
Let $\Phi\in\sii(\Real)^4$ be its limit. 
By the closedness of $H_{-\theta}-\bar{z}$, 
one has that $\Psi = (H_{-\theta}-\bar{z})\Phi$, which completes the proof.
\end{proof}

Now we can prove the ultimate result.
\begin{Theorem}\label{Thm.adjoint}
  $\displaystyle H^*_\theta = H_{-\theta}$.
\end{Theorem}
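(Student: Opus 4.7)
The plan is to establish the two inclusions $H_{-\theta}\subset H_\theta^*$ and $H_\theta^*\subset H_{-\theta}$ separately; the second is the non-trivial one and will be deduced from Lemma~\ref{Lem.resolvent} via a standard ``bijectivity at a common point'' argument.

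For the easy inclusion $H_{-\theta}\subset H_\theta^*$, I would first verify by a direct integration by parts on test functions that
\begin{equation*}
  (H_\theta \psi,\phi) = (\psi, H_{-\theta}\phi)
  \qquad \text{for every} \qquad
  \psi,\phi\in C_0^\infty(\Real)^4,
\end{equation*}
which follows at once from the Hermiticity of $\alpha_1,\alpha_2,\alpha_3$, the antisymmetry $\partial_x^*=-\partial_x$ on $C_0^\infty(\Real)$, and the fact that complex conjugation of the scalar factors $e^{\pm i\theta/2}$ exchanges $\theta\leftrightarrow -\theta$. To upgrade this identity to the operator inclusion, I would then observe that the construction in the proof of Theorem~\ref{Thm.closed} makes $C_0^\infty(\Real)^4$ a core for both $H_\theta$ and $H_{-\theta}$: indeed, $A$ is essentially self-adjoint on $C_0^\infty(\Real)^4$ with closure having domain $\mathcal{D}^4$, and, since $B$ is $A$-bounded with relative bound strictly less than one while $m\alpha_3$ is bounded, the graph norm of $H_{\pm\theta}$ is equivalent to that of $A$. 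A density argument in the graph norm of $H_{-\theta}$ then extends the identity above to arbitrary $\phi\in\Dom(H_{-\theta})$ and $\psi\in\Dom(H_\theta)$, giving $\phi\in\Dom(H_\theta^*)$ with $H_\theta^*\phi = H_{-\theta}\phi$.

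For the converse inclusion $H_\theta^*\subset H_{-\theta}$, I would combine Lemma~\ref{Lem.resolvent} with the general fact $\rho(H_\theta^*) = \{\bar z : z\in\rho(H_\theta)\}$, valid for any densely defined closed operator. Theorem~\ref{Thm.closed} guarantees $\rho(H_\theta)\neq\emptyset$, so I fix any $z\in\rho(H_\theta)$ and obtain $\bar z\in\rho(H_\theta^*)\cap\rho(H_{-\theta})$. Given $\phi\in\Dom(H_\theta^*)$, set $\eta:=(H_\theta^*-\bar z)\phi$; surjectivity of $H_{-\theta}-\bar z$ furnishes a unique $\tilde\phi\in\Dom(H_{-\theta})$ with $(H_{-\theta}-\bar z)\tilde\phi=\eta$. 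The first inclusion already gives $\tilde\phi\in\Dom(H_\theta^*)$ with $(H_\theta^*-\bar z)\tilde\phi=(H_{-\theta}-\bar z)\tilde\phi=\eta=(H_\theta^*-\bar z)\phi$, so injectivity of $H_\theta^*-\bar z$ forces $\phi=\tilde\phi\in\Dom(H_{-\theta})$.

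The main point that requires care is the core property of $C_0^\infty(\Real)^4$ for $H_{-\theta}$ invoked in the first step, since the formal adjoint computation only takes place on smooth compactly supported functions and must be promoted to the full domain. I expect no surprises there, as the required density is a direct by-product of the relative-boundedness scheme already exploited in Theorem~\ref{Thm.closed}. Once this is recorded, the rest is nothing more than the short, entirely standard bijectivity argument built on Lemma~\ref{Lem.resolvent}.
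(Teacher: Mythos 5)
Your proposal is correct and follows essentially the same route as the paper: the easy inclusion $H_{-\theta}\subset H_\theta^*$ from the formal adjoint relation (you spell out the core property of $C_0^\infty(\Real)^4$, which the paper leaves as ``trivial''), and the converse by fixing $z\in\rho(H_\theta)$, invoking Lemma~\ref{Lem.resolvent} to solve $(H_{-\theta}-\bar z)\xi=(H_\theta^*-\bar z)\Psi$, and identifying $\xi=\Psi$. The only cosmetic difference is that you conclude the identification from injectivity of $H_\theta^*-\bar z$, whereas the paper uses the equivalent fact that $\xi-\Psi$ is orthogonal to the full range of $H_\theta-z$.
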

\begin{proof}
The inclusion $\Dom(H_{-\theta})\subset \Dom(H^*_\theta)$ is trivial. Let $\Psi\in\Dom(H^*_{\theta})$ and $z\in\rho(H_{\theta})$. Then there exists $\chi \in \sii(\Real)^4$ such that  
$$
  (\chi,\Phi) = (\Psi, (H_\theta-z)\Phi)
  \,, \quad\forall \Phi\in\Dom(H_\theta)
  \,.
$$
Since $\bar{z}\in\rho(H_{-\theta})$ by Lemma~\ref{Lem.resolvent},
there exists $\xi\in\Dom(H_{-\theta})$ such that $\chi = (H_{-\theta}-\bar{z})\xi$. Then we get that
$$(\xi-\Psi, (H_\theta-z)\Phi) = 0 \,,\quad \forall \Phi\in\Dom(H_\theta) \,.$$
Hence $\Psi=\xi\in\Dom(H_{-\theta})$, which completes the proof.
\end{proof}

\subsection{Symmetries}\label{sec:symmetries}
An important symmetry is the anti-commutation
of~$H_\theta$ with~$\alpha_0$, \ie,
\begin{equation}\label{symmetry}
  H_\theta = - \alpha_0 H_{\theta} \alpha_0
  \,.
\end{equation}
Note that~$\alpha_0$ is unitary and involutive.
As a consequence, we obtain the following symmetries 
of spectra and pseudospectra of~$H_\theta$ 
with respect to the imaginary axis.
By convention, we set $\sigma_0(\cdot) := \sigma(\cdot)$.

\begin{Proposition}\label{prop:z_iff_-z}
For every $\eps \geq 0$, 
\begin{equation*}
  z \in \sigma_\eps(H_\theta) 
  \quad \Longleftrightarrow \quad
  -z \in \sigma_\eps(H_\theta) 
  \,.
\end{equation*}
\end{Proposition}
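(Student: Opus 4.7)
The plan is to leverage the anti-commutation identity~\eqref{symmetry} together with the unitarity of~$\alpha_0$ to build a direct similarity between the resolvents $(H_\theta-z)^{-1}$ and $(H_\theta+z)^{-1}$. Multiplying \eqref{symmetry} on the right by $\alpha_0$ and using $\alpha_0^2 = I_{\Com^4}$ yields $H_\theta \alpha_0 = -\alpha_0 H_\theta$, hence, for any $z\in\Com$,
\begin{equation*}
  (H_\theta + z)\,\alpha_0
  = -\alpha_0\,(H_\theta - z)
  \qquad\text{and therefore}\qquad
  H_\theta + z = -\alpha_0\,(H_\theta - z)\,\alpha_0 .
\end{equation*}

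From this key identity the spectral symmetry $z \in \sigma(H_\theta) \Leftrightarrow -z \in \sigma(H_\theta)$ is immediate, since $\alpha_0$ is a bijection on $\Dom(H_\theta)=\mathcal{D}^4$ and on $\sii(\Real)^4$. This disposes of the case $\eps=0$ of the proposition and, together with the corresponding statement for $\sigma(H_\theta)$, reduces the general claim to controlling the resolvent norm when $z\notin\sigma(H_\theta)$.

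For such~$z$, both $H_\theta-z$ and $H_\theta+z$ are boundedly invertible, and inverting the displayed identity gives
\begin{equation*}
  (H_\theta+z)^{-1} = -\alpha_0\,(H_\theta-z)^{-1}\,\alpha_0 .
\end{equation*}
Since~$\alpha_0$ is unitary on $\Com^4$ (and hence acts as an isometry on $\sii(\Real)^4$), taking norms yields $\|(H_\theta+z)^{-1}\| = \|(H_\theta-z)^{-1}\|$. Plugging this equality into the definition of $\sigma_\eps(H_\theta)$ furnishes the equivalence $z \in \sigma_\eps(H_\theta) \Leftrightarrow -z \in \sigma_\eps(H_\theta)$ for all $\eps>0$.

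There is no real obstacle here; the only mild care needed is to keep track of signs when moving~$\alpha_0$ past~$H_\theta$ and to treat the two pieces of the definition of $\sigma_\eps$ separately (the spectrum itself and the set where the resolvent norm exceeds $1/\eps$), so that the case $z\in\sigma(H_\theta)$ is not tacitly assumed away when inverting $H_\theta\pm z$.
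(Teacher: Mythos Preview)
Your proof is correct and follows essentially the same approach as the paper's: both exploit the anticommutation~\eqref{symmetry} and the unitarity of~$\alpha_0$ to obtain the resolvent identity $(H_\theta+z)^{-1}=-\alpha_0(H_\theta-z)^{-1}\alpha_0$ and hence $\|(H_\theta+z)^{-1}\|=\|(H_\theta-z)^{-1}\|$. The only cosmetic difference is that the paper phrases the $\eps=0$ case via eigenfunctions (using compactness of the resolvent), whereas you deduce it directly from the similarity relation; your version is arguably slightly cleaner.
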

\begin{proof}
It follows from~\eqref{symmetry} that if~$\lambda$ is an eigenvalue of~$H_\theta$
(with eigenfunction~$\psi$), 
then~$-\lambda$ is also an eigenvalue of~$H_\theta$
(with eigenfunction~$\alpha_0\psi$). 
The symmetry of pseudospectra follows from the identity
\begin{equation}\label{pseudosymmetry}
  \|(H_\theta-z)^{-1}\| 
  = \|\alpha_0(H_\theta-z)^{-1}\alpha_0\| 
  = \|(-H_\theta-z)^{-1}\| 
  = \|(H_\theta+z)^{-1}\| 
\end{equation}
valid for every $z \in \rho(H_\theta)$.
\end{proof}

{By Theorem~\ref{Thm.adjoint}, $H_\theta$ is self-adjoint
if, and only if, $\theta=0$.
In general, for every~$\theta$, $H_\theta$ is complex-self-adjoint
(in the sense of~\cite{CK3}).
More specifically, $H_\theta$ is $\mathcal{C}$-self-adjoint
in the sense that 
\begin{equation}\label{C-symmetry}
  H_\theta^* = \mathcal{C} H_{\theta} \mathcal{C} 
  \,,
\end{equation}
where $\mathcal{C} := \mathcal{PK}$
with $(\mathcal{P}\psi)(x) := \psi(-x)$ 
and $\mathcal{K}\psi := \bar\psi$ 
being the parity and the ``time-reversal'' operators, respectively. 
Note that $\mathcal{C}$ is an involution. 
If $m=0$, it is possible to find more such complex symmetries,
possibly non-involutive.}

\section{The spectrum}\label{Sec.spec}
%
In this section, we study the spectrum of the Dirac operator~$H_{\theta}$ 
and properties of its eigenfunctions. 
Our strategy is to employ the supersymmetric relationship~\eqref{eq:square}.
Let us therefore start with the analysis of the square~$H_\theta^2$.
 
\subsection{The square of the Dirac operator}
The identity~\eqref{eq:square} is equivalent to
\begin{equation}\label{eq:Dirac_Laplacian}
  H^2_\theta = 
  (S_\theta + m^2 - 1)
  \begin{pmatrix}
    1 & 0 & 0 & 0\\
    0 & 0 & 0 & 0\\
    0 & 0 & 1 & 0\\
    0 & 0 & 0 & 0\\
  \end{pmatrix}
  +
  (S_\theta + m^2 + 1)
  \begin{pmatrix}
    0 & 0 & 0 & 0\\
    0 & 1 & 0 & 0\\
    0 & 0 & 0 & 0\\
    0 & 0 & 0 & 1\\
  \end{pmatrix}.
\end{equation}
Hence $H^2_\theta = T_+\oplus T_-$ 
with $T_{\pm} := (S_\theta + m^2 \pm 1)\otimes I_{\C^2}$
with $\Dom(T_\pm) := \Dom(S_\theta)^2$.
Since $H_{\theta}^2$ has compact resolvent,
one has that $\sigma(H_{\theta}^2) = \sigma(T_+)\cup\sigma(T_-)$. 
It follows from~\eqref{spectrum.harmonic} and~\eqref{rotated_eigenfunction} 
that $\sigma(T_-) = \{2n +m^2\}_{n = 0}^\infty$,
where each number represents a doubly degenerate eigenvalue
with corresponding eigenfunctions 
$(\varphi_n, 0)^\top$ and $(0, \varphi_n)^\top$. 
Similarly, $\sigma(T_+) = \{2n +m^2\}_{n=1}^\infty$,
where each number represents a doubly degenerate eigenvalue
with corresponding eigenfunctions
$(\varphi_{n-1}, 0)^\top$ and $(0, \varphi_{n-1})^\top$. 
Consequently, 
$$
  \sigma(H_{\theta}^2) = \{2n +m^2\}_{n \in \Nat}
  \,,
$$ 
where 
\begin{itemize}
  \item for $n=0$, the number~$m^2$ represents 
  a doubly degenerate eigenvalue of~$H_{\theta}^2$
  with corresponding eigenfunctions 
  \begin{equation}\label{eigenfunction_n=0}
    \Phi^1_0 :=
    \begin{pmatrix}
      \varphi_0 \\ 0 \\ 0 \\ 0
    \end{pmatrix},
    \quad
    \Phi^2_0 :=
    \begin{pmatrix}
      0 \\ 0 \\ \varphi_0 \\ 0
    \end{pmatrix};
  \end{equation}
  \item for $n \geq 1$, the number~$2n+m^2$ represents 
  a fourfold degenerate eigenvalue of~$H_{\theta}^2$
  with corresponding eigenfunctions 
  \begin{equation}\label{eigenfunction_nneq0}
    \Phi^1_n :=
    \begin{pmatrix}
      \varphi_n \\ 0 \\ 0 \\ 0
    \end{pmatrix},
    \quad
    \Phi^2_n :=
    \begin{pmatrix}
      0 \\ \varphi_{n-1} \\ 0 \\ 0
    \end{pmatrix},
    \quad
    \Phi^3_n :=
    \begin{pmatrix}
      0 \\ 0 \\ \varphi_n \\ 0
    \end{pmatrix},
    \quad
    \Phi^4_n :=
    \begin{pmatrix}
      0 \\ 0 \\ 0 \\ \varphi_{n-1}
    \end{pmatrix}.
  \end{equation}
\end{itemize}

We claim that the root subspaces are exhausted by the eigenspaces,
so the algebraic multiplicities equal the geometric multiplicities. 
Indeed, suppose that for an eigenvalue $\lambda:=2n+m^2$ of $H^2_\theta$, 
the algebraic multiplicity is greater than the geometric multiplicity. 
Then there exists an eigenfunction 
$
  \Psi = \alpha_1 \Phi^1_n + \alpha_2 \Phi^2_n 
  + \alpha_3 \Phi^3_n + \alpha_4 \Phi^4_n
$
with $\alpha_j \in \Com$, $j \in \{1,\dots,4\}$,
and a generalised eigenfunction $\xi\in\Dom(H^2_\theta)$ such that
$(H^2_\theta-\lambda)\xi = \Psi$.
The latter implies, componentwise, 
$
  (S_\theta - (2n \mp 1))\xi_j = \alpha_i\varphi_p 
$,
where $p\in\{n,n-1\}$. At least one of the components is non-trivial,
so this implies the existence of a generalised eigenfunction 
for the Schr\"odinger operator~$S_\theta$, a contradiction.

\subsection{The Dirac operator}
First of all, let us observe the following spectral mapping property.
\begin{Lemma}\label{prop:z_iff_z2}
  $\lambda\in\sigma(H_\theta)$ if, and only if, $\lambda^2\in\sigma(H^2_\theta)$.
\end{Lemma}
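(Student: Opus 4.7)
The approach is the standard spectral mapping argument for the polynomial $p(z) = z^2$, adapted to the unbounded setting and exploiting the symmetries already at our disposal. First I would note that $H_\theta$ has compact resolvent by Theorem~\ref{Thm.closed}, and hence so does $H_\theta^2$; both spectra are therefore purely discrete and coincide with the respective sets of eigenvalues. Accordingly, the equivalence may be verified at the level of eigenfunctions.

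The forward direction is almost immediate: if $H_\theta\psi = \lambda\psi$ with $\psi \neq 0$, then $H_\theta\psi = \lambda\psi \in \Dom(H_\theta)$, so $\psi \in \Dom(H_\theta^2)$ and $H_\theta^2\psi = \lambda^2\psi$, which gives $\lambda^2 \in \sigma(H_\theta^2)$.

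For the converse, I would use the factorisation
$$
  H_\theta^2 - \lambda^2
  = (H_\theta - \lambda)(H_\theta + \lambda)
  = (H_\theta + \lambda)(H_\theta - \lambda),
$$
valid on $\Dom(H_\theta^2)$. Given $\lambda^2 \in \sigma(H_\theta^2)$ with eigenfunction $\Psi \neq 0$, set $\Phi := (H_\theta + \lambda)\Psi \in \Dom(H_\theta)$. If $\Phi \neq 0$, then $(H_\theta - \lambda)\Phi = 0$ exhibits $\lambda$ as an eigenvalue of $H_\theta$. Otherwise $\Phi = 0$, which means $(H_\theta + \lambda)\Psi = 0$ with $\Psi \neq 0$, so $-\lambda$ is an eigenvalue of $H_\theta$. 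Invoking the symmetry $\sigma(H_\theta) = -\sigma(H_\theta)$ established in Proposition~\ref{prop:z_iff_-z}, we conclude $\lambda \in \sigma(H_\theta)$ in either case.

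The only (very mild) obstacle is domain bookkeeping when distributing the product: one must check that $(H_\theta \pm \lambda)\Psi \in \Dom(H_\theta)$ whenever $\Psi \in \Dom(H_\theta^2)$, but this is immediate from $\Dom(H_\theta^2) = \{\Psi \in \Dom(H_\theta) : H_\theta\Psi \in \Dom(H_\theta)\}$. No further spectral theory is needed, since the compactness of the resolvent already rules out residual or continuous spectrum.
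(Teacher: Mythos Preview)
Your proof is correct and follows essentially the same route as the paper: both argue at the level of eigenfunctions (justified by compactness of the resolvent), use the factorisation $H_\theta^2-\lambda^2=(H_\theta-\lambda)(H_\theta+\lambda)$ to produce an eigenfunction for either $\lambda$ or $-\lambda$, and then invoke the symmetry of Proposition~\ref{prop:z_iff_-z}. The only cosmetic difference is which factor you apply first, and you are slightly more explicit about the domain bookkeeping.
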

\begin{proof}
If $\lambda\in\sigma(H_\theta)$, 
then it is easy to see that $\lambda^2\in\sigma(H^2_\theta)$.
Vice versa, suppose that~$\Psi$ is an eigenfunction of $H^2_\theta$ with eigenvalue $\lambda^2$. Recall that $\Dom(H^2_\theta) = \Dom(S_\theta)^4\subset \mathcal{D}^4 = \Dom(H_\theta)$. Then we have
\begin{equation*}
  (H^2_\theta-\lambda^2)\Psi= 0
  \quad\Leftrightarrow\quad
  (H_\theta+\lambda)(H_\theta-\lambda)\Psi=0
  \,.
\end{equation*}
Then either $(H_\theta-\lambda)\Psi=0$, 
in which case $\lambda\in\sigma(H_\theta)$, 
or $\xi:=(H_\theta-\lambda)\Psi\neq0$ is such that $(H_\theta+\lambda)\xi=0$,
in which case $-\lambda\in\sigma(H_\theta)$. 
The proof is completed by recalling 
the symmetry of Proposition~\ref{prop:z_iff_-z}.
\end{proof}

From this lemma, we immediately deduce~\eqref{eq:spectrum_Dirac}.
It is also easy to see that the eigenspace of~$H_\theta$ 
corresponding to an eigenvalue $\pm \sqrt{2n+m^2}$ is contained 
in the eigenspace of~$H_\theta^2$ 
corresponding to the eigenvalue $2n+m^2$. 

To determine the eigenspaces, 
we rely on a complex rotated version of the ladder operators 
associated to the algebra of the harmonic oscillator. 
Let $p := ie^{-i\theta/2}\partial_x$ with $\Dom(p) := H^1(\Real)$ 
and $q := e^{i\theta/2}x$ with $\Dom(q) := \sii(\Real,x \, \der x)$.
Notice that $[p,q] = iI$ and $p^2+q^2 = S_\theta$. 
Let us define the ladder operators $L_{\pm} := - p \pm iq$
with $\Dom(L_\pm) = \mathcal{D}$
that satisfy, on a test function $\Phi \in C_0^\infty(\Real)$,
$$
  S_\theta L_{\pm}\Phi 
  = L_{\pm} S_\theta \Phi \pm 2 L_{\pm}\Phi
  \,.
$$
These algebraic properties extended to 
the eigenfunction $\Phi := \varphi_n$ of~$S_\theta$ 
corresponding to the eigenvalue~$2n+1$ imply that
$L_{\pm}\varphi_n$ is proportional to~$\varphi_{n\pm1}$ if $n \geq 1$,
$L_+\varphi_0$ is proportional to~$\varphi_1$ 
and $L_-\varphi_0 = 0$. 
The proportionality factor can be computed by comparison 
with the ladder operators of the self-adjoint harmonic oscillator~$S_0$. 
Indeed, let $\{\phi_n\}_{n\in\Nat}$ be the orthonormal eigenbasis of~$S_0$. 
Then one has that
$$
  (-i\partial_x+ix)\phi_n(x) = \sqrt{2(n+1)} \, \phi_{n+1}
  \,,\qquad  (-i\partial_x-ix)\phi_n(x) = \sqrt{2n} \, \phi_{n-1} \,.
$$
Defining $\hat{x} := e^{i\theta/2}x$ 
and taking into account~\eqref{rotated_eigenfunction}, 
we have that
$$
\begin{aligned}
 L_+\varphi_n(x) &=  \left( -ie^{-i\theta/2}\partial_x+ie^{i\theta/2}x \right)\phi_n(e^{i\theta/2}x)\\
   &= -i \partial_{\hat{x}}\phi_n(\hat{x}) + i\hat{x}\phi_n(\hat{x})\\
   &= \sqrt{2(n+1)} \, \phi_{n+1}(\hat{x}) 
   = \sqrt{2(n+1)} \, \varphi_{n+1}(x).
\end{aligned}
$$
Likewise one can show that $L_-\varphi_n = \sqrt{2n} \, \varphi_{n-1}$.

One can express the operator $H_\theta$ using the ladder operators to get  
\begin{equation}\label{eq:Dirac_ladder}
  H_\theta=
  \begin{pmatrix}
    0 & 0 & m & L_+ \\
    0 & 0 & L_- & -m\\
    m & L_+ & 0 & 0  \\
    L_- & -m& 0 & 0 \\
  \end{pmatrix}.%
\end{equation}
Employing this structure and the discussion about~$S_\theta$ above,
it is straightforward to determine the eigenspaces of~$H_\theta$.

\begin{itemize}
\item
For $n=0$, the eigenfunctions corresponding to the eigenvalues~$\pm m$ 
must lie in the two dimensional subspace  
$\operatorname{span}\{\Phi_0^1,\Phi_0^2\}$. 
If $m > 0$, Proposition~\ref{prop:z_iff_-z} implies that these subspaces
cannot be degenerate.
If $m=0$, the zero eigenvalue is doubly degenerate. 
It is immediate to check that 
$$
  \chi^1_0 := 
  \begin{pmatrix}
    \varphi_0 \\ 0  \\ \varphi_0 \\ 0
  \end{pmatrix}
  , \quad
  \chi^2_0 := 
  \begin{pmatrix}
    \varphi_0 \\ 0 \\ -\varphi_0 \\ 0
  \end{pmatrix} 
  \,
$$
are the eigenfunctions corresponding to the eigenvalue~$m$ and~$-m$,
respectively.
\item 
For $n \geq 1$,
to obtain the eigenspaces for 
the eigenvalues $\pm\sqrt{2n+m^2}$,
we use that $\operatorname{ran} H_\theta \upharpoonright{W_n}\subset W_n$ 
with $W_n = \operatorname{span}\{\Phi^j_n\}_{j=1}^4$,
which follows straightforwardly from~\eqref{eq:Dirac_ladder}. 
Expressing $H_\theta \upharpoonright{W_n}$ in the basis 
$(\Phi^1_n,\Phi^2_n,\Phi^3_n,\Phi^4_n)$ 
given explicitly in~\eqref{eigenfunction_nneq0}, 
we obtain the matrix representation
\begin{equation*}
  H_\theta \upharpoonright {W_n}=
  \begin{pmatrix}
    0 & 0 & m & \sqrt{2n}\\
    0 & 0 & \sqrt{2n} & -m \\
    m & \sqrt{2n} & 0 & 0 \\
    \sqrt{2n} & -m & 0 & 0 \\
  \end{pmatrix}.
\end{equation*}
This is a Hermitian matrix and can be diagonalised with the following results. 
There is a doubly degenerate eigenvalue $\sqrt{2n+m^2}$ 
with eigenvectors
\begin{equation}\label{eq:coefficients_+}
u^1:=N^1
\begin{pmatrix}
  \sqrt{2n}\\
  \sqrt{2n + m ^2} -m\\
  \sqrt{2n}\\
  \sqrt{2n + m ^2} -m\\
\end{pmatrix},\quad
u^2:=N^2
\begin{pmatrix}
  -\sqrt{2n}\\
  \sqrt{2n + m ^2} +m\\
  \sqrt{2n}\\
  -\sqrt{2n + m ^2} -m\\
\end{pmatrix},\quad
\end{equation}
and a doubly degenerate eigenvalue $-\sqrt{2n+m^2}$ 
with eigenvectors
\begin{equation}\label{eq:coefficients_-}
u^3:=N^3
\begin{pmatrix}
  -\sqrt{2n}\\
  \sqrt{2n + m ^2} +m\\
  -\sqrt{2n}\\
  \sqrt{2n + m ^2} +m\\
\end{pmatrix},\quad
u^4:=N^4
\begin{pmatrix}
  \sqrt{2n}\\
  \sqrt{2n + m ^2} -m\\
  -\sqrt{2n}\\
  -\sqrt{2n + m ^2} +m\\
\end{pmatrix}.
\end{equation}
The normalisation constants $N^j \in \Com$
are chosen in such a way that $(u^1,u^2,u^3,u^4)$
is an orthonormal basis of $\C^4$. 
The eigenfunctions 
{$\chi^1_n$ and $\chi^2_n$
(respectively, $\chi^3_n$ and $\chi^4_n$)
of  the operator~$H_\theta$ corresponding to 
the (doubly degenerate) eigenvalues 
$\sqrt{2n+m^2}$ (respectively, $-\sqrt{2n+m^2}$)}
are thus given by 
\begin{equation}\label{explicit1}
\chi^j_n :=
\begin{pmatrix}
  u^j_1\varphi_{n} \\
  u^j_2\varphi_{n-1} \\
  u^j_3\varphi_{n} \\
  u^j_4\varphi_{n-1} \\
\end{pmatrix},
\end{equation}
where $\{u^j_k\}_{k=1}^4$ are the components of the vector $u^j\in\C^4$.
\end{itemize}

As for the square~$H_\theta^2$, we claim that the root subspaces 
of~$H_\theta$ are exhausted by the eigenspaces.    
Indeed, suppose that there exists $\lambda\in\sigma(H_\theta)$, an eigenfunction $\Psi\in\Dom(H_\theta)$ and a generalised eigenfunction $\xi\in\Dom(H_\theta)$. 
Then we have
$$
  (H_\theta-\lambda)\xi= \Psi
  \quad\Rightarrow\quad
  H_\theta(H_\theta-\lambda)\xi= H_\theta\Psi=\lambda\Psi
  \quad\Rightarrow\quad
  H^2_\theta -\lambda(\Psi + \lambda\xi)= \lambda\Psi
  \quad\Rightarrow\quad
  (H^2_\theta -\lambda^2)\xi = 2\lambda\Psi \,.
$$
This implies the existence of a generalised eigenfunction of~$H^2_\theta$, 
which is in contradiction with our previous observations.  
Hence the eigenvalues of~$H_\theta$ are semisimple 
(\ie, their algebraic and geometric multiplicities coincide).

Let $\{\tilde{\varphi}_n\}_{n\in\Nat}$ be the sequence of eigenfunctions 
of the adjoint $S^*_\theta = S_{-\theta}$ 
corresponding to the eigenvalues $\{2n+1\}_{n\in\Nat}$,
which are biorthonormal to $\{\varphi_n\}_{n\in\Nat}$.
Since the above construction holds for every $|\theta|<\frac{\pi}{2}$ and the matrices $H_\theta \upharpoonright W_n$ do not depend on~$\theta$, 
the functions
\begin{equation}\label{explicit2}
\tilde{\chi}^{{j}}_n=
\begin{pmatrix}
  u^{{j}}_1\tilde{\varphi}_{n} \\
  u^{{j}}_2\tilde{\varphi}_{n-1} \\
  u^{{j}}_3\tilde{\varphi}_{n} \\
  u^{{j}}_4\tilde{\varphi}_{n-1} \\
\end{pmatrix}
\end{equation}
{with $n \geq 1$}
determine the eigenfunctions of $H^*_\theta=H_{-\theta}$
corresponding to the {(doubly degenerate)} eigenvalues 
{$\sqrt{2n+m^2}$ for $j=1,2$ and $-\sqrt{2n+m^2}$ for $j=3,4$}. 
By defining 
$
  \Psi^{(n)} 
  := (\varphi_{n},\varphi_{n-1},\varphi_{n},\varphi_{n-1})^\top
  \in W_n
$ 
and analogously $\tilde{\Psi}^{(n)}$, 
one can express the components of the eigenfunctions through 
$(\chi^j_n)_k = u^j_k\Psi^{(n)}_k$ 
and $(\tilde{\chi}^j_n)_k = u^j_k\tilde{\Psi}^{(n)}_k$. 
Using these expressions, we compute
$$
\begin{aligned}
   (\tilde{\chi}^i_n , \chi^j_m)  
   =  \sum_{k = 1}^4 (u^i_k\tilde{\Psi}^{(n)}_k, u^j_k\Psi^{(m)}_k)
   =  \sum_{k = 1}^4 (u^i_k\tilde{\Psi}^{(n)}_k, u^j_k\Psi^{(n)}_k) \, \delta_{nm}
   =  \sum_{k = 1}^4 \bar{u}^i_ku^j_k\delta_{nm} = \delta_{ij} \, \delta_{nm},
\end{aligned}
$$
and hence $\{\tilde{\chi}^j_n\}_{n\in\Nat^*}^{j\in\{1,\dots,4\}}$ 
is biorthonormal to $\{{\chi}^j_n\}_{n\in\Nat^*}^{j\in\{1,\dots,4\}}$,
where $\Nat^* := \Nat \setminus \{0\}$.
The construction of the biorthonormal pair for $n=0$ is similar.

\subsection{The asymptotic behaviour of the eigenprojectors}\label{sec:asymptotics}
In this subsection, we establish Theorem~\ref{Thm.proj}. 

For every $n \in \Nat$,
let 
$$
  \P^+_n := \sum_{i=1}^2\chi^j_n(\tilde{\chi}^j_n, \cdot)
  \qquad \mbox{and} \qquad
  \P^-_n := \sum_{i=3}^4\chi^j_n(\tilde{\chi}^j_n, \cdot)
$$ 
denote the spectral projectors onto the eigenspaces associated 
with $\sqrt{2n+m^2}$ and $-\sqrt{2n+m^2}$, respectively.
\begin{Proposition}
$\norm{\P^+_n} = \norm{\P^-_n}$.
\end{Proposition}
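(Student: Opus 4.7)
The plan is to upgrade the anti-commutation symmetry~\eqref{symmetry}, namely $\alpha_0 H_\theta = - H_\theta \alpha_0$, from an operator identity to the level of spectral projectors, \emph{viz.} $\alpha_0 \P^+_n \alpha_0 = \P^-_n$. Once this is shown, the conclusion is immediate because $\alpha_0$ is unitary with $\alpha_0^2 = I$, so that
$$ \|\P^-_n\| = \|\alpha_0 \P^+_n \alpha_0\| = \|\P^+_n\| . $$

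First I would translate~\eqref{symmetry} to the level of resolvents. The identity $\alpha_0 (H_\theta - z) \alpha_0 = -H_\theta - z$ combined with $\alpha_0^2 = I$ gives
$$ \alpha_0 (H_\theta - z)^{-1} \alpha_0 = -(H_\theta + z)^{-1} $$
for every $z \in \rho(H_\theta)$; note that $\rho(H_\theta)$ is stable under $z \mapsto -z$ by Proposition~\ref{prop:z_iff_-z}. Next, since the eigenvalues $\pm\sqrt{2n+m^2}$ are isolated and have been shown to be semisimple earlier in this section, $\P^\pm_n$ agree with the Riesz projectors
$$ \P^\pm_n = -\frac{1}{2\pi i} \oint_{\Gamma^\pm_n} (H_\theta - z)^{-1} \, dz, $$
where $\Gamma^\pm_n$ is a small counterclockwise circle around $\pm\sqrt{2n+m^2}$ enclosing no other eigenvalue.

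Conjugating this integral representation of $\P^+_n$ by $\alpha_0$ and applying the resolvent identity above yields
$$ \alpha_0 \P^+_n \alpha_0 = \frac{1}{2\pi i} \oint_{\Gamma^+_n} (H_\theta + z)^{-1} \, dz . $$
The substitution $w = -z$ sends $\Gamma^+_n$ onto $\Gamma^-_n$ with the \emph{same} counterclockwise orientation, as one checks from the explicit parametrisation $z(t) = \sqrt{2n+m^2} + r e^{it}$, which yields $w(t) = -\sqrt{2n+m^2} + r e^{i(t+\pi)}$. Combined with $dz = -dw$, this converts the right-hand side into $-\frac{1}{2\pi i} \oint_{\Gamma^-_n} (H_\theta - w)^{-1} \, dw = \P^-_n$, which is the desired identity.

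I do not foresee any serious obstacle: the argument is an abstract consequence of the anti-commutation~\eqref{symmetry} together with the semisimplicity already established in this section. The only point requiring a moment's attention is the orientation of the transformed contour in the change of variables, which is immediate from the explicit parametrisation. An alternative, more calculational route would be to verify directly on the eigenfunctions~\eqref{explicit1}--\eqref{explicit2} that $\alpha_0$ sends $\chi^{1}_n, \chi^{2}_n$ and $\tilde\chi^{1}_n, \tilde\chi^{2}_n$ to scalar multiples of $\chi^{4}_n, \chi^{3}_n$ and $\tilde\chi^{4}_n, \tilde\chi^{3}_n$ respectively, but the contour-integral approach bypasses the bookkeeping of the normalisation constants~$N^j$.
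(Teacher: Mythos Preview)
Your proof is correct and reaches the same intermediate identity as the paper, namely $\alpha_0 \P^+_n \alpha_0 = \P^-_n$, from which the equality of norms follows by unitarity of~$\alpha_0$. The route, however, differs. The paper proceeds concretely: having the explicit eigenfunctions~\eqref{explicit1}--\eqref{explicit2} in hand, it simply checks that $\alpha_0 \chi^1_n = \chi^4_n$ and $\alpha_0 \chi^2_n = \chi^3_n$ (and likewise for the~$\tilde\chi$'s), and then reads off the conjugation identity directly from the definition of~$\P^\pm_n$ as a sum of rank-one projectors. Your contour-integral argument is the abstract version: it derives the same identity from the resolvent symmetry alone and never touches the explicit eigenfunctions or the normalisation constants~$N^j$. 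Your approach has the advantage of being structural---it would work verbatim for any closed operator with isolated eigenvalues satisfying an anti-commutation relation like~\eqref{symmetry}---whereas the paper's computation is quicker here precisely because the eigenfunctions have already been written down. Amusingly, the ``alternative, more calculational route'' you mention at the end \emph{is} the paper's proof.
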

\begin{proof}
By virtue of the symmetry~\eqref{symmetry}, 
the action of the Dirac matrix~$\alpha_0$ on the eigenfunctions 
maps the eigenspace associated with the eigenvalue~$\lambda$ 
into the eigenspace associated with the eigenvalue~$-\lambda$.
In fact, one can easily verify that 
$\alpha_0\chi^1_n = \chi^4_n$ and $\alpha_0\chi^2_n = \chi^3_n$.
Then, from the unitarity and hermiticity of $\alpha_0$, one has
$$ 
  \alpha_0\P^+_n\alpha_0 = \sum_{j=1}^2\alpha_0\chi^j_n(\tilde{\chi}^j_n, \alpha_0\cdot) = \sum_{j=3}^4\chi^j_n(\tilde{\chi}^j_n,\cdot) = \P_n^-
  \,,
$$
and therefore the desired claim.
\end{proof}

Because of this proposition, without loss of generality,
we focus on the eigenprojectors corresponding to non-negative eigenvalues
and drop the superindex ${+}$ for the rest of this section. 

For the non-relativistic harmonic operator~$S_\theta$,
it is easy to see that 
$\norm{P_n} = \norm{\varphi_n} \, \norm{\tilde{\varphi}_n}$, where $P_n$ are the projections associated to the non-relativistic rotated harmonic oscillator, cf.\ Eq.~\eqref{projectors}.
Moreover, $\norm{\varphi_n}=\norm{\tilde{\varphi}_n}$.
Then~\eqref{projectors} follows by the asymoptotics of Hermite functions.

We split the proof of Theorem~\ref{Thm.proj} in three {lemmata}.
\begin{Lemma}\label{lem:orthogonalsubspace}
{For every $n\in\Nat^*$, $(\chi^1_n,\chi^2_n) = 0$
and $(\tilde{\chi}^1_n,\tilde{\chi}^2_n) = 0$.}
\end{Lemma}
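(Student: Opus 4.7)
The plan is to exhibit a simple discrete symmetry of $H_\theta$ under which $\chi^1_n$ and $\chi^2_n$ sit in opposite eigenspaces; the required orthogonality then falls out from a one-line computation, and the tilde version follows by the identical argument.

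Concretely, introduce the block-swap matrix
\begin{equation*}
  T := \begin{pmatrix} 0 & I_{\Com^2} \\ I_{\Com^2} & 0 \end{pmatrix},
\end{equation*}
viewed as a bounded operator on $\sii(\Real)^4$. Using~\eqref{eq:alphamatrices}, a direct check gives $T\alpha_k T = \alpha_k$ for $k \in \{1,2,3\}$, hence $[T,H_\theta]=0$; evidently $T = T^* = T^{-1}$. From~\eqref{eq:coefficients_+}, one reads off that $u^1$ has the ``even'' block structure $(a,b,a,b)^\top$, while $u^2$ has the ``odd'' block structure $(-c,d,c,-d)^\top$. Combined with the explicit formula~\eqref{explicit1}, this yields $T\chi^1_n = \chi^1_n$ and $T\chi^2_n = -\chi^2_n$, so
\begin{equation*}
  (\chi^1_n,\chi^2_n) = (T\chi^1_n,\chi^2_n) = (\chi^1_n,T\chi^2_n) = -(\chi^1_n,\chi^2_n),
\end{equation*}
forcing $(\chi^1_n,\chi^2_n)=0$. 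The same argument, with $\tilde\varphi_k$ in place of $\varphi_k$ (the coefficients $u^j$ are identical, \cf~\eqref{explicit2}), delivers $(\tilde\chi^1_n,\tilde\chi^2_n)=0$.

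Alternatively, one can verify the claim by direct expansion. Since pairings across distinct component slots of $\chi^1_n$ and $\chi^2_n$ vanish automatically, one obtains
\begin{equation*}
  (\chi^1_n,\chi^2_n) = \bigl(\overline{u^1_1}u^2_1 + \overline{u^1_3}u^2_3\bigr)\|\varphi_n\|^2 + \bigl(\overline{u^1_2}u^2_2 + \overline{u^1_4}u^2_4\bigr)\|\varphi_{n-1}\|^2,
\end{equation*}
and the sign pattern recorded in~\eqref{eq:coefficients_+} makes each bracket vanish independently. No substantive obstacle is anticipated; the point worth emphasising is that the orthogonality does \emph{not} follow from the $\Com^4$-orthogonality of $u^1, u^2$ alone, because $\|\varphi_n\| \neq \|\varphi_{n-1}\|$ whenever $\theta \neq 0$, so the finer sign structure---equivalently, the symmetry~$T$---is genuinely needed.
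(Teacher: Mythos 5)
Your proposal is correct, and your ``alternative'' direct expansion is in fact exactly the paper's own proof: the authors expand $(\chi^1_n,\chi^2_n)$ into the two brackets $\bigl(\overline{u^1_1}u^2_1+\overline{u^1_3}u^2_3\bigr)\|\varphi_n\|^2+\bigl(\overline{u^1_2}u^2_2+\overline{u^1_4}u^2_4\bigr)\|\varphi_{n-1}\|^2$ and observe from \eqref{eq:coefficients_+} that each bracket vanishes separately. Your primary argument via the block-swap involution $T$ is a genuinely different packaging of the same cancellation: the identities $T\alpha_kT=\alpha_k$, $T\chi^1_n=\chi^1_n$, $T\chi^2_n=-\chi^2_n$ all check out, and the one-line self-adjointness computation then forces orthogonality. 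What the symmetry version buys is an explanation of \emph{why} the two brackets vanish (the eigenvectors $u^1,u^2$ were chosen in the $\pm1$ eigenspaces of $T$, which commutes with $H_\theta\upharpoonright W_n$), and it transfers to the tilde pair with no extra work; what the paper's bare computation buys is brevity. Your closing remark is also on point and worth keeping: since $\|\varphi_n\|\neq\|\varphi_{n-1}\|$ for $\theta\neq0$, the $\Com^4$-orthogonality of $u^1$ and $u^2$ alone would not suffice, so the slot-by-slot cancellation (equivalently, the $T$-parity) is genuinely needed.
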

\begin{proof}
{From definitions~\eqref{eq:coefficients_+} and~\eqref{eq:coefficients_-},} 
it follows that $\bar{u}^1_1u^2_1 + \bar{u}^1_3u^2_3 =\bar{u}^2_1u^2_2 + \bar{u}^1_4u^2_4 = 0$ and therefore
$$
  {({\chi}^1_n, {\chi}^2_n)} 
= |\bar{u}^1_1u^2_1 + \bar{u}^1_3u^2_3| \, \norm{\varphi_n}^2 
+ |\bar{u}^2_1u^2_2 + \bar{u}^1_4u^2_4| \, \norm{\varphi_{n-1}}^2=0.
$$
The proof for the pair $(\tilde{\chi}^1_n, \tilde{\chi}^2_n)$ is identical.
\end{proof}

\begin{Lemma}\label{Lem1}
{$
\displaystyle
  \limsup_{n\to\infty}\frac{\log{\norm{\P_n}}}{n}
  \leq \log \sqrt{\frac{1+|\sin\theta|}{1-|\sin\theta|}}
$.}
\end{Lemma}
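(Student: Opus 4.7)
The plan is to combine the orthogonality of Lemma~\ref{lem:orthogonalsubspace} with the explicit eigenfunction formulas~\eqref{explicit1}--\eqref{explicit2} and the known asymptotics~\eqref{projectors} of the Hermite-function norms $\|\varphi_n\|$.

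First, I would expand $\P_n\psi = \chi^1_n(\tilde\chi^1_n,\psi) + \chi^2_n(\tilde\chi^2_n,\psi)$ and use the orthogonality $(\chi^1_n,\chi^2_n)=0$ from Lemma~\ref{lem:orthogonalsubspace} together with the Pythagorean identity to obtain
\begin{equation*}
  \|\P_n\psi\|^2
  = \|\chi^1_n\|^2\,|(\tilde\chi^1_n,\psi)|^2
  + \|\chi^2_n\|^2\,|(\tilde\chi^2_n,\psi)|^2.
\end{equation*}
Applying Cauchy--Schwarz to each inner product then yields the operator-norm bound
$\|\P_n\|^2 \leq \|\chi^1_n\|^2\,\|\tilde\chi^1_n\|^2 + \|\chi^2_n\|^2\,\|\tilde\chi^2_n\|^2$.

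Next, I would read off from~\eqref{explicit1} that $\|\chi^j_n\|^2$ equals the convex combination $(|u^j_1|^2+|u^j_3|^2)\,\|\varphi_n\|^2 + (|u^j_2|^2+|u^j_4|^2)\,\|\varphi_{n-1}\|^2$, since the orthonormality of $(u^1,\dots,u^4)$ in $\Com^4$ forces $\sum_{k=1}^{4} |u^j_k|^2 = 1$. In particular $\|\chi^j_n\|^2 \leq \|\varphi_n\|^2 + \|\varphi_{n-1}\|^2$, and the same bound holds for $\|\tilde\chi^j_n\|^2$ since $\|\tilde\varphi_n\|=\|\varphi_n\|$ for every $n$, as noted in the text preceding Lemma~\ref{lem:orthogonalsubspace}.

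Feeding these estimates back gives $\|\P_n\|^2 \leq 2\,(\|\varphi_n\|^2+\|\varphi_{n-1}\|^2)^2$. Taking logarithms, dividing by $n$, and invoking~\eqref{projectors} would deliver the claimed upper bound; the contribution of the $\|\varphi_{n-1}\|^2$ term does not alter the exponential rate, since by~\eqref{projectors} both $\|\varphi_n\|^2$ and $\|\varphi_{n-1}\|^2$ share the same exponential growth. I do not anticipate any real obstacle here; it is essentially a bookkeeping argument once the Pythagorean step is in place, and the orthogonality of Lemma~\ref{lem:orthogonalsubspace} is the crucial ingredient allowing one to avoid a wasteful factor and recover the sharp rate.
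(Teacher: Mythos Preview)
Your proposal is correct and follows essentially the same route as the paper's proof: orthogonality of $\chi^1_n,\chi^2_n$ gives the Pythagorean splitting, Cauchy--Schwarz bounds each term, and the explicit formulas~\eqref{explicit1}--\eqref{explicit2} together with~\eqref{projectors} finish the job. The only difference is cosmetic---by exploiting that the coefficients form a genuine convex combination you obtain $\|\P_n\|\leq\sqrt{2}\,(\|\varphi_n\|^2+\|\varphi_{n-1}\|^2)$ instead of the paper's $\sqrt{8}$, but this constant is of course irrelevant for the limsup.
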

\begin{proof}
Using the orthogonality proved in the previous lemma 
and the Schwarz inequality{,} 
it follows that
$$
  \|\P_n \psi\|^2 = \|\chi^1_n\|^2 \, |(\tilde{\chi}^1_n,\psi)|^2
  + \|\chi^2_n\|^2 \, |(\tilde{\chi}^2_n,\psi)|^2
  \leq \|\chi^1_n\|^2 \, \|\tilde{\chi}^1_n\|^2 \, \|\psi\|^2
  + \|\chi^2_n\|^2 \, \|\tilde{\chi}^2_n\|^2 \, \|\psi\|^2
  \,
$$
and therefore $\norm{\P_n}\leq\sqrt{ \|\chi^1_n\|^2 \|\tilde{\chi}^1_n\|^2 
  + \|\chi^2_n\|^2 \|\tilde{\chi}^2_n\|^2 }$.
Now {it follows from \eqref{explicit1}--\eqref{explicit2} 
and the normalisation $|u^j|=1$ in~$\Com^4$ that,
for each $j \in \{1,2\}$ and $n \in \Nat^*$,
$$
  \norm{\chi^j_n}^2 \leq
  2 \, (\norm{\varphi_n}^2 + \norm{\varphi_{n-1}}^2)
  \qquad \mbox{and} \qquad
  \norm{\tilde\chi^j_n}^2 \leq
  2 \, (\norm{\varphi_n}^2 + \norm{\varphi_{n-1}}^2)
  \,.
$$
}
Combining the previous inequalities we get that
$$
  \norm{\P_n}\leq \sqrt{8} \, (\norm{\varphi_n}^2 + \norm{\varphi_{n-1}}^2).
$$
Diving by $n$ at both sides, {recalling}~\eqref{projectors} 
and taking the limit {$n\to\infty$} proves the result.
\end{proof}

\begin{Lemma}\label{Lem2}
{$\displaystyle
  \liminf_{n\to\infty}\frac{\log{\norm{\P_n}}}{n}
  \geq \log \sqrt{\frac{1+|\sin\theta|}{1-|\sin\theta|}}
$.}
\end{Lemma}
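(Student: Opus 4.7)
The plan is to lower-bound $\|\P_n\|$ by evaluating it on the explicit test vector $\tilde\chi^1_n$ and then controlling $\|\chi^1_n\|\,\|\tilde\chi^1_n\|$ from below by the corresponding norms of the non-relativistic eigenfunctions. The key pay-off of Lemma~\ref{lem:orthogonalsubspace} is that it kills the cross-term in $\P_n\tilde\chi^1_n$: indeed, using biorthonormality together with $(\tilde\chi^2_n,\tilde\chi^1_n)=0$,
\begin{equation*}
  \P_n\tilde\chi^1_n
  = \chi^1_n\,(\tilde\chi^1_n,\tilde\chi^1_n) + \chi^2_n\,(\tilde\chi^2_n,\tilde\chi^1_n)
  = \|\tilde\chi^1_n\|^2\,\chi^1_n,
\end{equation*}
so that the operator-norm estimate
\begin{equation*}
  \|\P_n\| \geq \frac{\|\P_n\tilde\chi^1_n\|}{\|\tilde\chi^1_n\|}
  = \|\chi^1_n\|\,\|\tilde\chi^1_n\|
\end{equation*}
drops out immediately.

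The second step is to verify that the $\Com^4$-coefficients of $\chi^1_n$ do not concentrate away from the $\varphi_n$-slots, so that $\|\chi^1_n\|^2$ inherits the exponential growth of $\|\varphi_n\|^2$. Setting $a_n := \sqrt{2n+m^2}-m$, the unit-norm requirement $|u^1|=1$ in~\eqref{eq:coefficients_+} yields $|N^1|^2 = 1/(4n+2a_n^2)$, whence
\begin{equation*}
  |u^1_1|^2 + |u^1_3|^2 = \frac{2n}{2n+a_n^2} \longrightarrow \tfrac{1}{2}
  \qquad\text{as}\qquad n\to\infty,
\end{equation*}
since $a_n^2 \sim 2n$. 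In particular, there exists $c>0$ such that for all sufficiently large $n$,
\begin{equation*}
  \|\chi^1_n\|^2 \geq \bigl(|u^1_1|^2+|u^1_3|^2\bigr)\,\|\varphi_n\|^2 \geq c\,\|\varphi_n\|^2,
\end{equation*}
and the identical reasoning applied to~\eqref{explicit2} gives $\|\tilde\chi^1_n\|^2 \geq c\,\|\tilde\varphi_n\|^2$.

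Combining the two steps with the identity $\|\varphi_n\|=\|\tilde\varphi_n\|$ (already used in the discussion preceding Lemma~\ref{lem:orthogonalsubspace}), one obtains
\begin{equation*}
  \frac{\log\|\P_n\|}{n} \geq \frac{\log c + \log\|\varphi_n\|^2}{n},
\end{equation*}
and taking $\liminf_{n\to\infty}$ together with the non-relativistic asymptotics~\eqref{projectors} delivers the desired lower bound $\log\sqrt{(1+|\sin\theta|)/(1-|\sin\theta|)}$. The only genuine computation is the elementary limit $|u^1_1|^2+|u^1_3|^2 \to \tfrac12$; the would-be obstacle, namely choosing a good test function, is entirely bypassed by taking $\tilde\chi^1_n$ itself, whose Lemma~\ref{lem:orthogonalsubspace}-orthogonality to $\tilde\chi^2_n$ makes the spectral projector act as the rank-one map $\tilde\chi^1_n\mapsto\|\tilde\chi^1_n\|^2\chi^1_n$.
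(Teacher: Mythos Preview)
Your proof is correct and follows the same route as the paper: test $\P_n$ on $\tilde\chi^1_n$, use Lemma~\ref{lem:orthogonalsubspace} to kill the cross-term, and deduce $\|\P_n\|\geq\|\chi^1_n\|\,\|\tilde\chi^1_n\|$. The only cosmetic difference is in the final step: the paper keeps both terms $(|u^1_1|^2+|u^1_3|^2)\|\varphi_n\|^2+(|u^1_2|^2+|u^1_4|^2)\|\varphi_{n-1}\|^2$ and invokes concavity of the logarithm together with the normalisation $|u^1|=1$, whereas you drop the $\|\varphi_{n-1}\|^2$ term and instead compute the explicit limit $|u^1_1|^2+|u^1_3|^2\to\tfrac12$; either way the asymptotics~\eqref{projectors} finish the job.
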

\begin{proof}
{Given any $n \in \Nat^*$, one has 
$$
  \P_n \tilde{\chi}^1_n 
  = {\chi}^1_n ({\tilde\chi}^1_n, \tilde{\chi}^1_n)
  + {\chi}^2_n ({\tilde\chi}^2_n, \tilde{\chi}^1_n)
  = {\chi}^1_n \| \tilde\chi^1_n \|^2   
  \,,
$$
where the second equality follows by Lemma~\ref{lem:orthogonalsubspace}.
Consequently, recalling~\eqref{explicit1} and~\eqref{explicit2}, 
$$
  \|\P_n\| \geq \frac{\| \P_n \tilde{\chi}^1_n \|}{\| \tilde\chi^1_n \|}
  = \|{\chi}^1_n\| \,  \|\tilde\chi^1_n \|   
  = (|u^1_1|^2 + |u^1_3|^2 ) \, \norm{\varphi_n}^2 
  + (|u^1_2|^2 + |u^1_4|^2 ) \, \norm{\varphi_{n-1}}^2
  \,,
$$
where the last equality employs that 
$\norm{\varphi_n} = \norm{\tilde{\varphi}_n}$ for every $n \in \Nat$.
Using the concavity of the logarithm, we get}
\begin{align*}
  \frac{\log{\norm{\P_n}}}{n} 
  & \geq \frac{ \log
  \big[
  (|u^1_1|^2 + |u^1_3|^2) \, \norm{\varphi_n}^2 
  + (|u^1_2|^2 + |u^1_4|^2) \, \norm{\varphi_{n-1}}^2
  \big]
  }{n} 
  \\
    &\geq (|u^1_1|^2 + |u^1_3|^2 ) \,
    \frac{\log{\norm{\varphi_n}^2}}{n} 
    +  (|u^1_2|^2 + |u^1_4|^2 ) \,
    \frac{ \log \norm{\varphi_{n-1}}^2 }{n}
    \,.
\end{align*}
{Taking the limit $n\to\infty$,
recalling~\eqref{projectors} 
and the normalisation $|u^1|=1$ in~$\Com^4$, the result follows.}
\end{proof}

{Theorem~\ref{Thm.proj} follows as a consequence 
of Lemmata~\ref{Lem1} and~\ref{Lem2}.}

\subsection{The non-relativistic limit}
{The final part of this section is devoted 
to the analysis of the non-relativistic limit.} 
For this purpose it is needed to include the dimensionfull constants in the expressions of the differential operators. 
The relativistic rotated {harmonic oscillator} becomes 
\begin{equation}\label{eq:dimensionrelativistic}
  {\hat{H}}_\theta(c) := - ic \alpha_1 e^{-i\theta/2} \partial_x
  - cm\omega\alpha_2 e^{i\theta/2} x
  + mc^2 \alpha_3
  \,{,} \qquad  
  \Dom({\hat{H}}_\theta(c)) 
  {:= \Dom(H_\theta) = } \mathcal{D}^4   \,,
\end{equation} 
where $c$ is the speed of light, 
$m>0$ is the mass and $\omega$ is the frequency of the oscillator.
The non-relativistic rotated {harmonic oscillator reads}
$$
  {\hat{S}}_\theta 
  = \frac{1}{2m}\left(-e^{-i\theta}\partial^2_x 
  + m^2\omega^2 e^{i\theta}x^2 \right)
  \,{,} \qquad  
  \Dom({\hat{S}}_\theta) 
  {:= \Dom(S_\theta) = }   
  H^2(\Real) \cap \sii(\Real,x^4 \, \der x) \,.
$$

{Note that the mass is supposed to be positive in this subsection,
for massless particles are not covered by the non-relativistic quantum mechanics.
Mathematically, it is justified by the fact that the non-relativistic limit
corresponds to considering the limit $c \to \infty$
and no additive renormalisation is possible if $m=0$.}

{The following result shows that 
(suitably renormalised) $\hat{H}_\theta(c)$ 
converges as $c \to \infty$ 
to (a constant shift of)~$\hat{S}_\theta$ 
in a norm-resolvent sense.}

\begin{Theorem}\label{thm:nrlimit}
{For every $z\in\C\backslash\R$,
\begin{equation}\label{limit}
  \lim_{c\to\infty} 
  \left(
  \hat{H}_\theta(c)-mc^2 I_{\Com^4}  - z I_{\Com^4} 
  \right)^{-1} 
  = \left( \hat{S}_\theta \, I_{\Com^4} 
  + \frac{ \omega}{2} \, i\alpha_1\alpha_2 
  -z I_{\Com^4}
  \right)^{-1}
\end{equation}
}%
in the uniform operator topology.
\end{Theorem}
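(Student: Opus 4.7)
My plan is to exploit the supersymmetric relationship~\eqref{eq:square}, suitably adapted to the dimensionful setting. The same Clifford-algebra calculation yields
\begin{equation*}
\hat{H}_\theta(c)^2 = m^2 c^4 I_{\Com^4} + 2mc^2 \tilde S,
\qquad
\tilde S := \hat S_\theta I_{\Com^4} + \frac{\omega}{2}\,i\alpha_1\alpha_2,
\end{equation*}
from which the differences-of-squares factorisation
\begin{equation*}
\bigl(\hat H_\theta(c) - mc^2 - z\bigr)\bigl(\hat H_\theta(c) + mc^2 + z\bigr) = 2mc^2\,(\tilde S - z_c),
\qquad
z_c := z + \frac{z^2}{2mc^2},
\end{equation*}
holds on $\Dom(\hat S_\theta)^4$. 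For $z\in\Com\setminus\Real$ and $c$ sufficiently large, $z_c\notin\Real$; since $\tilde S$ has real spectrum (being block-diagonal with blocks $\hat S_\theta \pm \omega/2$), $(\tilde S - z_c)^{-1}$ is bounded, and inverting the two commuting factors above gives the key resolvent representation
\begin{equation*}
\bigl(\hat H_\theta(c) - mc^2 - z\bigr)^{-1}
= \frac{1}{2mc^2}\bigl[\hat H_\theta(c) + (mc^2 + z)I\bigr]\,(\tilde S - z_c)^{-1}.
\end{equation*}

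Writing $\hat H_\theta(c) = cA + mc^2\alpha_3$ with $A := -i\alpha_1 e^{-i\theta/2}\partial_x - m\omega\alpha_2 e^{i\theta/2}x$, the representation splits into three pieces,
\begin{equation*}
\frac{A}{2mc}(\tilde S - z_c)^{-1} + \frac{I+\alpha_3}{2}(\tilde S - z_c)^{-1} + \frac{z}{2mc^2}(\tilde S - z_c)^{-1},
\end{equation*}
which we would analyse term-by-term. The outer two vanish in operator norm as $c\to\infty$: the last because $(\tilde S - z_c)^{-1}$ is uniformly bounded in $c$, and the first because $A$ is $\tilde S^{1/2}$-bounded (in the spirit of the estimates~\eqref{ineq.A}--\eqref{ineq.B}, combined with the boundedness of the matrix $i\alpha_1\alpha_2$ that distinguishes $\tilde S$ from $\hat S_\theta I$), so that $\|A(\tilde S - z_c)^{-1}\|$ stays uniformly bounded and the prefactor $(2mc)^{-1}$ forces decay. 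The middle term converges in norm to $\frac{I+\alpha_3}{2}(\tilde S - z)^{-1}$ via the first resolvent identity $(\tilde S - z_c)^{-1} - (\tilde S - z)^{-1} = (z_c - z)(\tilde S - z_c)^{-1}(\tilde S - z)^{-1}$ together with $|z_c - z| = O(c^{-2})$.

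The main obstacle I anticipate is the passage from the surviving $\frac{I+\alpha_3}{2}(\tilde S - z)^{-1}$ to the full resolvent on the right-hand side of~\eqref{limit}. The matrix $(I+\alpha_3)/2$ is the spectral projector of $\alpha_3$ onto its $+1$-eigenspace, and it naturally encodes the suppression of the ``negative-energy'' sector in which the eigenvalues $-\sqrt{2nmc^2\omega + m^2c^4} - mc^2$ of $\hat H_\theta(c) - mc^2$ diverge to $-\infty$. Reconciling this projector with the stated limit is the crucial step: it should be achievable either by a Foldy--Wouthuysen-type similarity argument whose transformation converges to the identity on the positive-energy subspace, or by an interpretation of the right-hand side of~\eqref{limit} as acting effectively on the range of $(I+\alpha_3)/2$, the remaining half of the resolvent contributing a term that decays in norm by the same mechanism as $\frac{z}{2mc^2}(\tilde S - z_c)^{-1}$ above.
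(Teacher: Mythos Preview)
Your approach differs from the paper's: the paper simply invokes Thaller's non-relativistic limit theorem \cite[Thm.~6.1 \& Corol.~6.2]{Thaller} as a black box, noting that the self-adjointness hypothesis there is inessential because the relevant spectra of $Q$ and $Q^2/2m$ are real. You instead carry out a direct resolvent computation from the squared identity $\hat H_\theta(c)^2 = m^2c^4 I + 2mc^2\tilde S$. Your difference-of-squares factorisation and the term-by-term analysis of
\[
\frac{A}{2mc}(\tilde S - z_c)^{-1} \;+\; \frac{I+\alpha_3}{2}(\tilde S - z_c)^{-1} \;+\; \frac{z}{2mc^2}(\tilde S - z_c)^{-1}
\]
are correct (for the first term, $A$ being $\tilde S$-bounded is what is actually needed --- the ``$\tilde S^{1/2}$-bounded'' phrasing would require care since $\tilde S$ is not self-adjoint --- and that relative bound holds). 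The surviving limit $\frac{I+\alpha_3}{2}(\tilde S - z)^{-1} = P_+(\tilde S - z)^{-1}$ is exactly what Thaller's theorem yields in this setting.

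The ``obstacle'' you flag is therefore not a gap in your reasoning but a discrepancy with the statement of~\eqref{limit} as literally written: the right-hand side should carry the projector $P_+=\frac12(I+\alpha_3)$. The projector is forced by the spectral picture you yourself describe (the branch $-\sqrt{m^2c^4+2mc^2\tilde S}-mc^2$ goes to $-\infty$, so the resolvent tends to zero on that sector), and the paper's own Remark immediately after the theorem explicitly names~$P_+$. Your option~(b) --- reading the limit as acting on $\operatorname{ran}P_+$ --- is the correct resolution; no Foldy--Wouthuysen transformation can manufacture the missing $P_-$-block, so option~(a) cannot reconcile the two expressions. In short, your computation is sound and in fact pins down the limit more precisely than the stated formula; the paper's proof buys brevity, yours is self-contained and exposes the projector structure explicitly.
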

\begin{proof}
The proof is a direct application of 
{\cite[Thm.~6.1 \& Corol.~6.2]{Thaller} 
to} the operator ${\hat{H}}_\theta(c) = cQ -Mc^2\tau$, 
where 
$
  Q := - i\alpha_1 e^{-i\theta/2} \partial_x
  - m\omega\alpha_2 e^{i\theta/2} x
$
{with $\Dom(Q) := \mathcal{D}^4$}
is a closed operator with compact resolvent and real spectrum, 
$\tau {:=} \alpha_3$ and  $M {:=} m {I}_{\C^4}$. 
{The self-adjointness in the rather algebraic proof of~\cite{Thaller} 
does not play any fundamental role. 
Indeed, the argument just requires $z\in \rho(Q)\cap\rho(Q^2/2m)$,
which is guaranteed by the fact that the spectra of~$Q$ 
and~$Q^2$ are real.}
\end{proof}

{
\begin{Remark}
Since the Dirac operator~$H_\theta$ is not written
in the standard representation (in the sense that the mass
term is multiplied by~$\alpha_3$ instead of~$\alpha_4$),
the limiting operator in~\eqref{limit} has a less transparent structure
than in~\cite[Eq.~(6.18)]{Thaller}.
Nonetheless,
this representation is unitarily equivalent to the standard one 
and the limiting operator is still a diagonal operator commuting with
$P_+ := \frac{1}{2}(I_{\Com^4}+\alpha_3)$,
the orthogonal projection onto the positive subspace of~$\alpha_3$.
\end{Remark}
}

\section{The pseudospectra}\label{Sec.pseudo}
%
This section is devoted to the proof of Theorem~\ref{Thm.pseudo}.
 
Writing
\begin{equation*}
\begin{aligned}
  \frac{1}{\eps^2} <
  \|(H_\theta^2-z^2)^{-1}\| 
  &= \|(H_\theta-z)^{-1} (H_\theta+z)^{-1}\| 
  \\
  &\leq \|(H_\theta-z)^{-1} \| \, \| (H_\theta+z)^{-1}\| 
  \\
  &= \| (H_\theta \pm z)^{-1}\|^2 
  \,,
\end{aligned}
\end{equation*}
where the last equality follows by~\eqref{pseudosymmetry},
we deduce that
\begin{equation*}
  z^2 \in \sigma_{\eps^2}(H_\theta^2)
  \quad \Longrightarrow \quad
  \pm z \in \sigma_{\eps}(H_\theta)
  \,.
\end{equation*}
That is,
\begin{equation*}
\left.
\begin{aligned}
  &z^2-m^2 - 1 \in \sigma_{\eps^2}(S_\theta)
  \\
  & \qquad\qquad \mbox{or}
  \\
  &z^2-m^2 + 1 \in \sigma_{\eps^2}(S_\theta)
\end{aligned}
\ \right\}
  \quad \Longrightarrow \quad
  \pm z \in \sigma_{\eps}(H_\theta)
  \,.
\end{equation*}
Combining this result with~\eqref{pseudo},
we arrive at the first inclusion of Theorem~\ref{Thm.pseudo}.
 
To establish the second inclusion of Theorem~\ref{Thm.pseudo},
we use the following theorem. 
  \begin{Theorem}[{\cite[Theorem~IV.1.16]{Kato}}] \label{thm:kato_resolvent_bound}
    Let $X, Y$ be two Banach spaces and let $T$ and $B$ be operators from $X$ to $Y$.
    Let $T^{-1}$ exist and be bounded.
    Assume that, for every $\phi \in X$, it holds
    \begin{equation*}
      \|B\phi\| \leq a \|\phi\| + b \|T\phi\|,
      \quad \text{with} \quad a \|T^{-1}\| + b < 1.
    \end{equation*}
    Then, $S := T + B$ is closed and invertible, with
    \begin{equation}\label{K.bound}
      \|S^{-1}\| \leq \frac{\|T^{-1}\|}{1 - a \|T^{-1}\| - b}.
    \end{equation}
  \end{Theorem}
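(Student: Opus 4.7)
The plan is the classical Neumann-series argument. The key idea is the factorisation
$$
  S = T + B = (I_Y + BT^{-1})\,T,
$$
where $I_Y$ denotes the identity on $Y$. Since $T^{-1}\colon Y\to\Dom(T)$ is bounded and $\Dom(T)\subseteq\Dom(B)$ (implicit in the hypothesis, as the inequality $\|B\phi\|\leq a\|\phi\|+b\|T\phi\|$ must be meaningful wherever $T\phi$ is defined), the composition $BT^{-1}\colon Y\to Y$ is everywhere defined. Inverting the two factors separately will then yield $S^{-1}=T^{-1}(I_Y+BT^{-1})^{-1}$.

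The first step is to estimate $\|BT^{-1}\|$. For any $\psi\in Y$, set $\phi:=T^{-1}\psi\in\Dom(T)$ and apply the relative bound to obtain
$$
  \|BT^{-1}\psi\| = \|B\phi\| \leq a\|\phi\| + b\|T\phi\| = a\|T^{-1}\psi\| + b\|\psi\| \leq \bigl(a\|T^{-1}\|+b\bigr)\|\psi\|.
$$
Setting $q:=a\|T^{-1}\|+b$, the hypothesis reads $q<1$, so the Neumann series for $(I_Y+BT^{-1})^{-1}$ converges in $\mathcal{L}(Y)$ and yields
$\|(I_Y+BT^{-1})^{-1}\|\leq (1-q)^{-1}$.

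Composing with $T^{-1}$ produces a bounded operator $S^{-1}\colon Y\to\Dom(T)$ with
$$
  \|S^{-1}\| \leq \|T^{-1}\|\,(1-q)^{-1} = \frac{\|T^{-1}\|}{1-a\|T^{-1}\|-b},
$$
which is exactly~\eqref{K.bound}. Closedness of $S$ then follows automatically, since any operator possessing an everywhere-defined bounded inverse is closed; alternatively, one can verify this directly from the factorisation $S=(I_Y+BT^{-1})T$, noting that the first factor is a bounded bijection on $Y$ and $T$ is closed.

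Because the result is classical and purely algebraic, I do not anticipate any substantive obstacle. The only care required is in interpreting the hypothesis so that the domain inclusion $\Dom(T)\subseteq\Dom(B)$ is available, ensuring that $BT^{-1}$ is well-defined on all of $Y$ and that $S^{-1}$ lands in $\Dom(T)=\Dom(S)$.
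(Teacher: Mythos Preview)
The paper does not give its own proof of this statement: it is quoted verbatim as \cite[Theorem~IV.1.16]{Kato} and used as a black box, so there is nothing in the paper to compare against. Your Neumann-series argument via the factorisation $S=(I_Y+BT^{-1})T$ is correct and is in fact the standard proof one finds in Kato's book, so your proposal is fine.
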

We already used this theorem when defining~$H_\theta$ in Section~\ref{Sec.def}.
Now we employ the bound~\eqref{K.bound}.

  \begin{Proposition} \label{prop:resolvent_bound}
    Let $|\theta| < \frac{\pi}{2}$ and let $z$ be a complex number such that
    \begin{equation*}
      |\tan\mbox{$\frac{\theta}{2}$}| \, (m + |\Re z|) 
      < \left(1 - |\tan\mbox{$\frac{\theta}{2}$}| \right) |\Im z|.
    \end{equation*}
    Then 
    \begin{equation*}
      \|(H_\theta - z)^{-1}\| 
      \leq \frac{1}{(1 - |\tan \frac{\theta}{2}|) \, |\Im z| 
      - |\tan \frac{\theta}{2}| \, (m + |\Re z|)}.
    \end{equation*}
  \end{Proposition}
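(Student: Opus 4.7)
The plan is to invoke Theorem~\ref{thm:kato_resolvent_bound} on the splitting $H_\theta - z = T + B$, where $T := T_0 - z$ with $T_0 := A + m\alpha_3$ and $B$ is the anti-self-adjoint operator from~\eqref{eq:H_theta_AB}. The key observation is that $T_0$ is self-adjoint: $A$ is essentially self-adjoint (as established in Section~\ref{Sec.def}) and $m\alpha_3$ is a bounded self-adjoint perturbation. The anticommutations $\{\alpha_1,\alpha_3\} = \{\alpha_2,\alpha_3\} = 0$ further yield $A\,m\alpha_3 + m\alpha_3\,A = 0$, hence the supersymmetric identity $\|T_0\psi\|^2 = \|A\psi\|^2 + m^2\|\psi\|^2$. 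Since $\sigma(T_0)\subset\Real$, the standard self-adjoint estimate gives $\|T^{-1}\| \leq 1/|\Im z|$.

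The core analytic step is the relative bound
\begin{equation*}
  \|B\psi\| \leq |\tan\mbox{$\frac{\theta}{2}$}|\,\|T\psi\|
  + |\tan\mbox{$\frac{\theta}{2}$}|\,(m + |\Re z|)\,\|\psi\|
\end{equation*}
for every $\psi$ in the domain of $H_\theta$. First I would compare $B$ with $A$: subtracting~\eqref{ineq.A} and~\eqref{ineq.B} produces the identity $\|B\psi\|^2 = \tan^2\mbox{$\frac{\theta}{2}$}\,\|A\psi\|^2 - 2\sin^2\mbox{$\frac{\theta}{2}$}\,\widetilde{\|\psi\|^2}$, whose sign-indefinite correction must be absorbed using the supersymmetric identity from the first paragraph. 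Then I would pass from $A$ to $T$ by two triangle inequalities: $\|A\psi\| \leq \|T_0\psi\| + m\|\psi\|$ (since $A = T_0 - m\alpha_3$) and $\|T_0\psi\| \leq \|T\psi\| + |\Re z|\|\psi\|$, the latter being a consequence of the self-adjoint Pythagorean identity $\|T\psi\|^2 = \|(T_0 - \Re z)\psi\|^2 + (\Im z)^2\|\psi\|^2$.

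With this in hand, Theorem~\ref{thm:kato_resolvent_bound} applies with $a := |\tan\mbox{$\frac{\theta}{2}$}|\,(m + |\Re z|)$ and $b := |\tan\mbox{$\frac{\theta}{2}$}|$. The smallness condition $a\|T^{-1}\|+b<1$ rewrites exactly as the hypothesis $|\tan\mbox{$\frac{\theta}{2}$}|\,(m + |\Re z|) < (1 - |\tan\mbox{$\frac{\theta}{2}$}|)\,|\Im z|$ of the proposition, and the Kato bound~\eqref{K.bound} clears to the claimed inequality after multiplying numerator and denominator by $|\Im z|$.

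The main obstacle is the sharp form of the comparison $\|B\psi\| \leq |\tan\mbox{$\frac{\theta}{2}$}|\,\|A\psi\|$: the naive estimate $|\widetilde{\|\psi\|^2}| \leq \|\psi\|^2$ on its own leaves an additive correction of order $|\sin\mbox{$\frac{\theta}{2}$}|\,\|\psi\|$ that does not appear in the proposition's clean formulation. Closing this gap requires the supersymmetric identity $\|A\psi\|^2 = \|T_0\psi\|^2 - m^2\|\psi\|^2$, whereby the mass contribution is to be used to absorb the problematic $\widetilde{\|\psi\|^2}$ term; this delicate interplay between the mass and the rotation angle is the subtle bookkeeping in the argument.
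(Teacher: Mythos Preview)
Your overall architecture coincides with the paper's: the same splitting $H_\theta - z = (A + m\alpha_3 - z) + B$, the same observation that $A + m\alpha_3$ is self-adjoint so that $\|T^{-1}\| \leq 1/|\Im z|$, the same target relative bound $\|B\psi\| \leq |\tan\frac{\theta}{2}|\,\|T\psi\| + |\tan\frac{\theta}{2}|\,(m + |\Re z|)\,\|\psi\|$, and the same appeal to Theorem~\ref{thm:kato_resolvent_bound}. Your two triangle inequalities for passing from $A$ to $T$ are correct, and so is the identity $\|B\psi\|^2 = \tan^2\frac{\theta}{2}\,\|A\psi\|^2 - 2\sin^2\frac{\theta}{2}\,\widetilde{\|\psi\|^2}$.

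The gap is in your final paragraph, where you propose to absorb the sign-indefinite correction $-2\sin^2\frac{\theta}{2}\,\widetilde{\|\psi\|^2}$ via the supersymmetric identity $\|A\psi\|^2 = \|T_0\psi\|^2 - m^2\|\psi\|^2$, i.e.\ by the mass. Substituting gives
\[
  \|B\psi\|^2
  = \tan^2\mbox{$\frac{\theta}{2}$}\,\|T_0\psi\|^2
  - \tan^2\mbox{$\frac{\theta}{2}$}\,m^2\|\psi\|^2
  - 2\sin^2\mbox{$\frac{\theta}{2}$}\,\widetilde{\|\psi\|^2}
  \,,
\]
and the hoped-for cancellation would need $\tan^2\frac{\theta}{2}\,m^2 \geq 2\sin^2\frac{\theta}{2}$, that is $m^2 \geq 2\cos^2\frac{\theta}{2}$, which is not assumed (the proposition includes $m=0$). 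The paper does not use the mass for this step at all; it compares $B$ with $A - i\Im z$ rather than with $A$ or $T_0$, invoking~\eqref{ineq.A}--\eqref{ineq.B} together with $\|(A - i\Im z)\psi\|^2 = \|A\psi\|^2 + (\Im z)^2\|\psi\|^2$, so that it is the term $(\Im z)^2\|\psi\|^2$ that is charged with controlling $\widetilde{\|\psi\|^2}$. One then finishes with the single triangle inequality $\|(A - i\Im z)\psi\| \leq \|T\psi\| + (m + |\Re z|)\|\psi\|$.
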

  \begin{proof}
    Let $A$, $B$ be the symmetric and anti-symmetric parts of $H_\theta$ 
    with $m=0$ as defined in~\eqref{eq:H_theta_AB}.
    Since $H_\theta - z = A + B + m\alpha_3 - z$, 
    the idea is to obtain the result by applying Theorem~\ref{thm:kato_resolvent_bound} with 
    $S := H_\theta$ and
    $T := A + m\alpha_3 - z$.
    The term $m\alpha_3 - \Re z$ is bounded and $A$ is self-adjoint, 
    and therefore $A + m\alpha_3 - \Re z$ is self-adjoint. 
    This implies that~$T$ is invertible with bounded inverse satisfying 
    $\|T^{-1}\| \leq 1/|\Im z|$.
    Moreover, $\alpha_3$ is unitary so, 
    for any $\psi \in \Dom(A) = {\mathcal{D}^4}$, it holds
    \begin{equation*}
      \|(A - i\Im z)\psi\| 
      \leq \|(A + m\alpha_3 - z)\psi\| + (m + |\Re z|)\|\psi\|.
    \end{equation*}
    Finally, by~\eqref{ineq.A}, \eqref{ineq.B} 
    and the conditions on~$\theta$ and $\Im z$, 
    it follows
    \begin{equation*}
      \|B\psi\| 
      \leq |\tan \mbox{$\frac{\theta}{2}$}| \, \|(A - i\Im z) \psi\|
      \leq |\tan \mbox{$\frac{\theta}{2}$}| \,
      \big(\|T\psi\| + (m + |\mu|) \, \|\psi\|\big).
    \end{equation*}
The announced bound then follows by~\eqref{K.bound}.
  \end{proof}

{To discuss the optimality of Theorem~\ref{Thm.pseudo}, 
we restrict to $m=0$.
Then the second inclusion of Theorem~\ref{Thm.pseudo}
(or Proposition~\ref{prop:resolvent_bound}) implies that 
\begin{equation} 
  \limsup_{r \to \pm\infty}
  \big\|\big(H_\theta -r e^{i\vartheta}\big)^{-1}\big\|  
  < \infty
  \qquad \mbox{whenever} \qquad
  |\vartheta| >  
  \arctan\left(
  \frac{|\tan\mbox{$\frac{\theta}{2}$}|}
  {1 - |\tan\mbox{$\frac{\theta}{2}$}|}
  \right)
  =: f(\theta)
  \,.
\end{equation}
From the second inclusion of Theorem~\ref{Thm.pseudo}
(see also~\eqref{wild}), we necessarily have $f(\theta) \geq |\frac{\theta}{2}|$.
This inequality becomes sharp in the limit $\theta \to 0$,
for $f(\theta) = |\frac{\theta}{2}| + O(\theta^2)$.
Therefore, Theorem~\ref{Thm.pseudo} becomes optimal for small values of~$\theta$.
On the other hand, $f(\theta) \to \frac{\pi}{2}$ as $\theta \to \frac{\pi}{2}$,
which shows that $f(\theta) \sim |\theta|$ in this limit.
Consequently, Theorem~\ref{Thm.pseudo} is not optimal 
for large values of~$|\theta|$.
The dependence of $f(\theta)$ on~$\theta$ 
is depicted in Figure~\ref{Fig.bounds}.
We leave as an open problem whether $|\vartheta| = |\frac{\theta}{2}|$
is the transition angle between the nature of pseudospectral properties
of~$H_\theta$ (in the same way as $|\vartheta| = |\theta|$ is 
in the non-relativistic setting of~$S_\theta$,
see~\eqref{Davies.wild} versus~\eqref{Davies.nowild}).}

  \begin{figure}[h!]
    \begin{center}
      \includegraphics[width=0.7\textwidth]{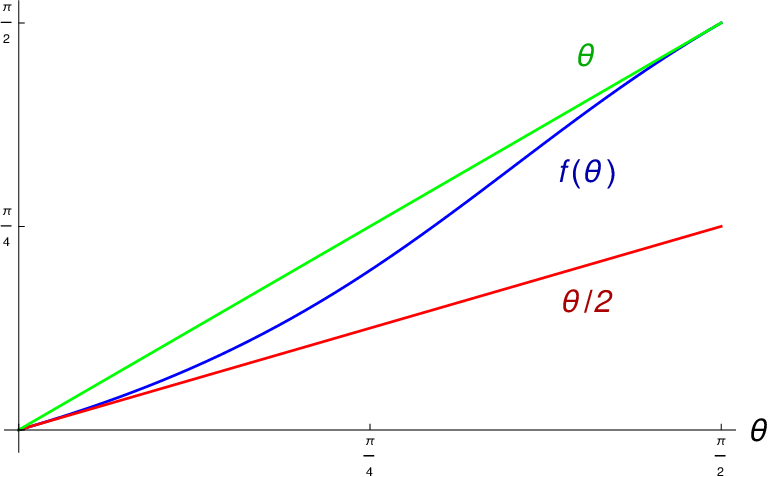}
    \end{center}
    \caption{Visualisation of the (lack of) optimality of Theorem~\ref{Thm.pseudo}
    for $m=0$.}\label{Fig.bounds}
  \end{figure}

\subsection*{Keywords}
pseudospectrum, resolvent estimate, relativistic harmonic oscilator,
non-self-adjoint Dirac operator, rotated harmonic oscillator, Davies operator, wild basis properties

\subsection*{MCS 2020}
34L40, 34L15, 47A10, 81Q12 

\subsection*{Acknowledgment}
D.K.\ was supported
by the EXPRO grant No.~20-17749X
of the Czech Science Foundation.
  A.B. and J.M.P.P. acknowledge support provided by the ``Agencia Estatal de Investigación (AEI)'' Research Project PID2020-117477GB-I00, by the QUITEMAD Project TEC-2024/COM-84-QUITEMAD-CM funded by the Madrid Government (Comunidad de Madrid-Spain) and by the Madrid Government (Comunidad de Madrid-Spain) under the Multiannual Agreement with UC3M in the line of ``Research Funds for Beatriz Galindo Fellowships'' (C\&QIG-BG-CM-UC3M), and in the context of the V PRICIT (Regional Programme of Research and Technological Innovation).
  A.B.\ acknowledges financial support from the Spanish Ministry of Universities through the UC3M Margarita Salas 2021-2023 program (``Convocatoria de la Universidad Carlos III de Madrid de Ayudas para la recualificación del sistema universitario español para 2021-2023'').
%

%
\bibliography{bib}
\bibliographystyle{amsplain}

\end{document}